\newtheorem{theorem}{Theorem}[section]
\newtheorem{corollary}{Corollary}[theorem]
\newtheorem{lemma}[theorem]{Lemma}
\newtheorem{remark}{Remark}[section]
\title{A consistent kinetic model for a two-component mixture with an application to plasma}
\author{Christian Klingenberg, Marlies Pirner and Gabriella Puppo}
\date{}
\begin{document}
\maketitle
%
%

\bigskip


\begin{abstract}
We consider a non reactive multi component gas mixture.We propose a class of models, which can be easily generalized to multiple species. The two species mixture is modelled by a system of kinetic BGK equations featuring two interaction terms to account for momentum and energy transfer between the species.
We prove consistency of our model: conservation properties, positivity of the solutions for the space homogeneous case, positivity of all temperatures,  H-theorem and convergence to a global equilibrium in the space homogeneous case in the form of a global Maxwell distribution. Thus, we are able to derive the usual macroscopic conservation laws. In particular, by considering a mixture composed of ions and electrons, we derive the macroscopic equations of ideal MHD from our model.
\\ \\ \textbf{keywords:} multi-fluid mixture, kinetic model, BGK approximation, plasma flow.
\end{abstract}

\section{Introduction}
 
 In this paper we shall concern ourselves with a kinetic description of gases. This is traditionally done via the Boltzmann equation for the density distributions $f_1$ and $f_2$. Under certain assumptions the complicated interaction terms of the Boltzmann equation can be simplified by a so called BGK approximation, consisting of a collision frequency multiplied by the deviation of the equilibrium distribution from $f_1$ respective $f_2$. This approximation should be constructed in a way such that it  has the same main properties of the Boltzmann equation namely conservation of mass, momentum and energy, further it should have an H-theorem with its entropy inequality and the equilibrium must still be Maxwellian.
 
Here we shall focus on gas mixtures modelled via a BGK approach. In the literature one can find two types of models. Just like the Boltzmann equation for gas mixtures contains a sum of collision terms on the right-hand side, one type of BKG models also has a sum of  BGK-type interaction terms in the relaxation operator. Examples are the models of Gross and Krook \cite{gross_krook1956}, Hamel \cite{hamel1965},  Asinari \cite{asinari}, Garzo Santos Brey \cite{Garzo1989}, Sofena \cite{Sofonea2001}, see also Cercignani \cite{Cercignani_1975}. The other type of models contains only one collision term on the right-hand side. Examples for this are Andries, Aoki and Perthame \cite{AndriesAokiPerthame2002} (giving rise to an indifferentiability principle for the species), the models in \cite{Brull_2012, Groppi} or the model by Brull \cite{Brull} with an extension leading to a correct Prandtl number in the Navier Stokes equation, adapting the ES-BGK model for mixtures. 

In this paper we are interested in the first type of models. We want to describe a gas mixture with a BGK approach containing a sum of collision terms on the right-hand side. Our interest in this kind of models comes from the fact that it is still used by engineers, chemists and physicists and in numerical applications, see for example \cite{Monteferrante, Carlos2007}. Moreover BGK and ES-BGK models give rise to efficient numerical computations, which are asymptotic preserving, that is they remain efficient even approaching the hydrodynamic regime \cite{Puppo_2007, Jin_2010,Dimarco_2014, Bennoune_2008, Dimarco, Bernard_2015, Crestetto_2012}. We are motivated by the example of a mixture of electrons and ions without chemical reactions, so there is no transfer of mass from one species to the other. The particles of the two species are so different that it is desirable to maintain their contribution separately. Here the collision frequencies differ vastly, a characteristic we want to preserve in our model. Our model is so general that the related models of Gross and Krook \cite{gross_krook1956} and Hamel \cite{hamel1965} can be derived as special cases. For our model we are able to show conservation properties, the H-theorem, positivity of solutions with positive initial data in the space homogeneous case and positivity of all temperatures.

The outline of the paper is as follows: in section 2 we will present the model for two species and prove the conservation properties and the H-theorem. We discuss the relationship of the collision frequencies in the case of plasmas and show the positivity of solutions with positive initial data in the space homogeneous case and the positivity of all temperatures. In section 3, we compare our model with other models present in the literature. First we consider related models and next we compare our model with the model of Andries, Aoki and Perthame \cite{AndriesAokiPerthame2002}. 
In section 4 we consider the asymptotic limit of a mixture given by the positive and negative particles of an ionized gas, and show that we obtain the macroscopic equations of classical MHD



\section{A two species kinetic model}
In this section we will present the model for two species and prove the conservation properties and the H-theorem. Further, we especially discuss the relationship between the collision frequencies and analyse the positivity of the temperatures.
\subsection{The general form of the model}
For simplicity in the following we consider a mixture composed of two different species. Thus, our kinetic model has two distribution functions $f_1(x,v,t)> 0$ and $f_2(x,v,t) > 0$ where $x\in \mathbb{R}^3$ and $v\in \mathbb{R}^3$ are the phase space variables and $t\geq 0$ the time. They are determined by two equations to describe their time evolution. Furthermore we consider binary interactions. So the particles of one species can interact with either themselves or with particles of the other species. In the model this is accounted for introducing two interaction terms in both equations. These considerations allow us to write formally the system of equations for the evolution of the mixture. The following structure containing a sum of the collision operator is also given in \cite{Cercignani, Cercignani_1975}. 
We describe the time evolution of the number distribution functions $f_1$ and $f_2$  by the Boltzmann equation with binary interactions  for two species of particles as in \cite{Cercignani}, chapter 6.2
\begin{align*}
\partial_t f_1 + v \cdot \nabla_x f_1 + \frac{F_1}{m_1} \cdot \nabla_v f_1 = Q_{11}(f_1,f_1)+ Q_{12}(f_1,f_2),
\\
\partial_t f_2 + v \cdot \nabla_x f_2 + \frac{F_2}{m_2} \cdot \nabla_v f_2 = Q_{22}(f_2,f_2)+ Q_{21}(f_2,f_1) ,
\end{align*}
where $ \frac{F_1}{m_1} $ respective $\frac{F_2}{m_2} $ are the acceleration of the respective species due to forces $F_k$ on particles of species $k$ with mass $m_k$ for $k=1,2$ and $Q_{kl}$, $k,l=1,2$ are the collision operators for interactions of species $k$ with species $l$. \\

 Furthermore we relate the distribution functions to  macroscopic quantities by mean-values of $f_k$
\begin{align}
\begin{split}
\int f_k(v) \begin{pmatrix}
1 \\ v \\ m_k |v-u_k|^2 \\ 
\end{pmatrix} 
dv =: \begin{pmatrix}
n_k \\ n_k u_k \\ 3 n_k T_k 
\end{pmatrix},
\end{split}
\label{macrosqu}
\end{align}
where $n_k$ is the number density, $u_k$ the mean velocity and $T_k$ the temperature which is related to the pressure $p_k$ by $p_k=n_k T_k$. Note that in this paper we shall write $T_k$ instead of $k_B T_k$, where $k_B$ is Boltzmann's constant.

\subsection{Conservation  properties of the collision operators}
A model for the evolution of a mixture  should satisfy the following conservation properties:

Conservation of mass, momentum and energy of the individual species in interaction with the species itself:

\begin{enumerate} 
\item $\int Q_{kk}(f_k,f_k) dv = 0 \quad \text{for} \quad k = 1, 2, $
\item  $\int  m_k v Q_{kk}(f_k,f_k) dv= 0 \quad \text{ for} \quad k = 1, 2, $
\item  $\int m_k |v|^2 Q_{kk}(f_k,f_k) dv= 0 \quad \text{ for} \quad  k = 1, 2. $
\end{enumerate}

Conservation of total mass, momentum and energy 
\begin{enumerate}
\item $\int Q_{kl}(f_k,f_l) dv = 0 $  for $k,l=1,2,$
\item $ \int( m_1 v Q_{12}(f_1,f_2)+ m_2 v Q_{21}(f_2,f_1)) dv = 0, $
\item $ \int (m_1 |v|^2 Q_{12}(f_1,f_2)+m_2 |v|^2 Q_{21}(f_2,f_1) )dv = 0. $
\end{enumerate}
\label{cons_prop}

\subsection{The BGK approximation}

We are interested in a BGK approximation of the interaction terms. This leads us to define equilibrium distributions not only for each species itself but also for the two interspecies equilibrium distributions. Choose the collision terms $Q_{11}, Q_{12}, Q_{21}$ and $Q_{22}$ in section 2.2 as BGK operators. Then the model can be written as:

\begin{align} \begin{split} \label{BGK}
\partial_t f_1 + \nabla_x \cdot (v f_1) + \frac{F_1}{m_1} \nabla_v f_1  &= \nu_{11} n_1 (M_1 - f_1) + \nu_{12} n_2 (M_{12}- f_1),
\\ 
\partial_t f_2 + \nabla_x \cdot (v f_2)+ \frac{F_2}{m_2} \nabla_v f_2 &=\nu_{22} n_2 (M_2 - f_2) + \nu_{21} n_1 (M_{21}- f_2), 
\end{split}
\end{align}
with the Maxwell distributions
\begin{align} 
\begin{split}
M_1(x,v,t) = \frac{n_1}{\sqrt{2 \pi \frac{T_1}{m_1}}^3} \exp({- \frac{|v-u_1|^2}{2 \frac{T_1}{m_1}}}),
\\
M_2(x,v,t) = \frac{n_2}{\sqrt{2 \pi \frac{T_2}{m_2}}^3} \exp({- \frac{|v-u_2|^2}{2 \frac{T_2}{m_2}}}),
\\
M_{12}(x,v,t) = \frac{n_{12}}{\sqrt{2 \pi \frac{T_{12}}{m_1}}^3} \exp({- \frac{|v-u_{12}|^2}{2 \frac{T_{12}}{m_1}}}),
\\
M_{21}(x,v,t) = \frac{n_{21}}{\sqrt{2 \pi \frac{T_{21}}{m_2}}^3} \exp({- \frac{|v-u_{21}|^2}{2 \frac{T_{21}}{m_2}}}),
\end{split}
\label{BGKmix}
\end{align}
where $\nu_{11}$ and $\nu_{22}$ are the collision frequencies of the particles of each species with itself, while $\nu_{12}$ and $\nu_{21}$ are related to interspecies collisions. The structure of the collision terms ensures that if one collision frequency $\nu_{kl} \rightarrow \infty$ the corresponding distribution function becomes Maxwell distribution. In addition at global equilibrium, the distribution functions become Maxwell distributions with the same velocity and temperature (see section 2.8).
The Maxwell distributions $M_1$ and $M_2$ in \eqref{BGKmix} have the same moments as $f_1$ respective $f_2$. With this choice, we guarantee the conservation of mass, momentum and energy in interactions of one species with itself (see section 2.2).
The remaining parameters $u_{12}, u_{21}, T_{12}$ and $T_{21}$ will be determined using conservation of total momentum and energy, together with some symmetry considerations.

\subsection{Relationship between the collision frequencies}
The goal of this subsection is to derive an estimate for the ratio of all the relaxation parameters $\nu_{11}, \nu_{12}, \nu_{22}$ and $\nu_{21}$ in the case of a plasma.

The parameters $\nu_{12}$ and $\nu_{21}$ are linked to the interspecies collision frequency. In plasmas, the mass ratio of the two kinds of particles is $\frac{m_2}{m_1} <<1,$ where 1 denotes ions and 2 denotes electrons. In this case a common relationship found in literature \cite{bellan2006} is
\begin{equation} \label{nu}
\nu_{12}=\frac{m_2}{m_1}\nu_{21}.
\end{equation}
A motivation for this relationship in the case of a plasma can be found in \cite{bellan2006}, chapter 1.9, which we want to mention here shortly. The collision frequency is proportional to the differential cross section and the relative velocity. For the typical velocity of ions and electrons close to equilibrium one can take the thermal velocity $v_{T_i}= (\frac{2 T_i}{m_i})^{\frac{1}{2}}$, $i=1,2$ and assume that the temperatures are of the same order, $T_1\approx T_2$. The cross sections are considered equal, because they depend on the interaction potential, which in this case is the Coulomb force, that is the same for both particles. So the only thing which remains to consider is the relative velocity. Since the mass of the ions $m_1$ is much larger than the mass of the electrons $m_2$, we get in case of $\nu_{21}$  for the relative velocity of an ion and an electron \begin{align*} \left( \frac{2 T_1}{m_1} \right)^{\frac{1}{2}}-\left( \frac{2 T_2}{m_2} \right)^{\frac{1}{2}} &\approx \left( 2 T_2 \right)^{\frac{1}{2}} \left( \left( \frac{1}{m_1} \right)^{\frac{1}{2}} - \left( \frac{1}{m_2} \right)^{\frac{1}{2}} \right)\\&=\left( 2 T_2 \right)^{\frac{1}{2}} \frac{1-\left( \frac{m1}{m_2}\right)^{\frac{1}{2}}}{m_2^{\frac{1}{2}}}\approx \left( 2 T_2 \right)^{\frac{1}{2}} \left( \frac{1}{m_2} \right)^{\frac{1}{2}} ,
\end{align*}
 which is the order of magnitude of the mean velocity of the electrons. We expect the relative velocity of two electrons to have the same order of magnitude as the thermal velocity of an electron. Since $\nu_{22}$ is proportional to the relative velocity of two electrons and we only want to compare the order of magnitudes of $\nu_{21}$ and $\nu_{22}$, we conclude that $\nu_{21}$ and $\nu_{22}$ are of the same order of magnitude, so we have $$\nu_{21} \approx \nu_{22}.$$ Now consider $\nu_{11}$. The ion thermal velocity is lower by an amount of $(\frac{m_2}{m_1})^{\frac{1}{2}}$ with respect to the electrons, since $$ \left( \frac{2 T_1}{m_1} \right)^{\frac{1}{2}} = \left( \frac{m_2}{m_1} \right)^{\frac{1}{2}} \left( \frac{2 T_1}{m_2} \right)^{\frac{1}{2}} \approx \left( \frac{m_2}{m_1} \right)^{\frac{1}{2}} \left( \frac{2 T_2}{m_2} \right)^{\frac{1}{2}}
 .$$ Therefore $$ \nu_{11} \approx (\frac{m_2}{m_1})^{\frac{1}{2}} \nu_{22}.$$ 
 For an estimate of $\nu_{12}$ and $\nu_{21}$ we consider a collision of an electron head-on  with an ion.  The velocities after a collision of an ion with an electron are given by
 $$ v'_1 = v_1 - \frac{2m_2}{m_1+m_2}[(v_1-v_2)\cdot \omega] \omega,$$
 $$ v'_2 = v_2 - \frac{2m_1}{m_1+m_2}[(v_2-v_1)\cdot \omega] \omega, $$
 where $\omega$ is a unit vector in $S^2 $.  Since we consider a head-on collision this simplifies to
 $$v'_1 = v_1 - \frac{2m_2}{m_1+m_2}(v_1-v_2),$$
 $$ v'_2 = v_2 - \frac{2m_1}{m_1+m_2}(v_2-v_1) .$$
 Since $m_2$ is small compared to $m_1$, we get
 $$ v'_1 = v_1 + O(\frac{m_2}{m_1}),$$
 $$ v'_2 = v_2 +  O(1),$$
 which reflects the physical fact that collisions of a heavy particle with a light one have a bigger influence on the lighter one than on the heavy one. 
 Hence $\nu_{12}= \frac{m_2}{m_1} \nu_{22}$. \\
 To summarize, in the case of ions and electrons, the collision frequencies can be ordered as follows:
 $$ \nu_{21} \approx \nu_{22} \approx (\frac{m_1}{m_2})^{\frac{1}{2}} \nu_{11} \approx (\frac{m_1}{m_2}) \nu_{12}.$$

See also \cite{struchtrup}.
To be flexible in choosing the relationship between the collision frequencies, we now assume the relationship
\begin{equation} 
\nu_{12}=\varepsilon \nu_{21}, \quad 0 < \varepsilon \leq 1 .
\label{coll}
\end{equation}
If $\varepsilon >1$, exchange the notation $1$ and $2$ and choose $\frac{1}{\varepsilon}.$
\subsection{Conservation properties}
This section shows how the macroscopic quantities in the interspecies Maxwellians have to be chosen in order to ensure the macroscopic conservation properties.
\begin{theorem}[Conservation of the number of each species]
Assume that \begin{align} n_{12}=n_1 \quad \text{and} \quad n_{21}=n_2,  
\label{density}
\end{align} 
then 
$$
\int Q_{11}(f_1,f_1) dv =  \int Q_{12}(f_1,f_2) dv = \int Q_{22}(f_2, f_2) dv = \int Q_{21}(f_2,f_1) dv = 0 .
$$
\end{theorem}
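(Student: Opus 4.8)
The plan is to substitute the explicit BGK form of each collision operator from \eqref{BGK} and reduce every integral to a comparison of zeroth moments. Writing $Q_{11}(f_1,f_1)=\nu_{11}n_1(M_1-f_1)$, $Q_{12}(f_1,f_2)=\nu_{12}n_2(M_{12}-f_1)$, $Q_{22}(f_2,f_2)=\nu_{22}n_2(M_2-f_2)$ and $Q_{21}(f_2,f_1)=\nu_{21}n_1(M_{21}-f_2)$, each of the four integrals factors as a constant collision frequency times a number density times the difference of two zeroth moments. So the whole statement comes down to computing $\int M\,dv$ for each Maxwellian in \eqref{BGKmix} and matching it against the prescribed moment of the corresponding $f_k$ from the definition \eqref{macrosqu}.

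First I would record the single computational fact underlying everything: for a Maxwellian of the form $\frac{n}{\sqrt{2\pi T/m}^{\,3}}\exp\!\big(-\frac{|v-u|^2}{2T/m}\big)$, the Gaussian normalization gives $\int M\,dv=n$, independent of $u$, $T$ and $m$. Applying this to $M_1,M_2,M_{12},M_{21}$ yields $\int M_1\,dv=n_1$, $\int M_2\,dv=n_2$, $\int M_{12}\,dv=n_{12}$ and $\int M_{21}\,dv=n_{21}$.

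For the self-interaction terms the conclusion is then immediate and needs no hypothesis: since $M_1$ is built to share the moments of $f_1$, we have $\int M_1\,dv=n_1=\int f_1\,dv$ by \eqref{macrosqu}, so $\int Q_{11}\,dv=\nu_{11}n_1(n_1-n_1)=0$, and identically $\int Q_{22}\,dv=0$. For the cross terms I would compute $\int Q_{12}\,dv=\nu_{12}n_2\,(n_{12}-n_1)$ and $\int Q_{21}\,dv=\nu_{21}n_1\,(n_{21}-n_2)$; these vanish if and only if the interspecies number densities are chosen as in assumption \eqref{density}, namely $n_{12}=n_1$ and $n_{21}=n_2$. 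This is exactly what pins down the otherwise free parameters $n_{12},n_{21}$ and shows the assumption is not merely sufficient but natural.

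There is no genuine obstacle here: the only nontrivial ingredient is the normalization of the Gaussian, which is a standard $\mathbb{R}^3$ integral, and the rest is linearity of the integral together with the definitions in \eqref{macrosqu}. The one point that deserves a word of care is that the normalization is independent of the velocity $u_{12}$ and temperature $T_{12}$ (and of the mass factor $m_1$ appearing in $M_{12}$), so that matching the zeroth moment constrains only $n_{12}$ and leaves $u_{12},T_{12}$ free to be fixed later by momentum and energy balance, as announced at the end of Section~2.3.
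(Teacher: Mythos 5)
Your proof is correct and takes essentially the same approach as the paper: both arguments reduce to matching zeroth moments, using that $M_1$, $M_2$ share their densities with $f_1$, $f_2$ by construction and that $\int M_{12}\,dv = n_{12}$, $\int M_{21}\,dv = n_{21}$ by Gaussian normalization, so the cross terms vanish precisely under assumption \eqref{density}. If anything, your write-up is slightly cleaner, since the paper routes the argument through $\partial_t \int f_1\,dv = 0$ in the space-homogeneous case, whereas you verify directly that each of the four integrals vanishes separately.
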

\begin{proof}
Conservation of mass implies that in the homogeneous case $\partial_t \int f_1 dv = 0$. Therefore
$$ \nu_{11} n_1 \int (M_1 - f_1) dv + \nu_{12} n_2 \int ( M_{12} -f_1) dv =0.$$ Since $\int (M_1 - f_1)dv =0$, this equation holds provided that $n_{12}=n_1$. Similarly for the second equation, $n_{21}=n_2$.
\end{proof}
\begin{theorem}[Conservation of total momentum]
Assume the relationships \eqref{coll} and \eqref{density} hold and assume further that $u_{12}$ is a linear combination of $u_1$ and $u_2$
 \begin{align}
u_{12}= \delta u_1 + (1- \delta) u_2, \quad \delta \in \mathbb{R}.
\label{convexvel}
\end{align} Then we have conservation of total momentum
$$
\int m_1 v [Q_{11}(f_1,f_1)+Q_{12}(f_1,f_2)] dv +
\int m_2 v [Q_{22}(f_2,f_2)+Q_{21}(f_2,f_1)] dv = 0, 
$$
provided that
\begin{align}
u_{21}=u_2 - \frac{m_1}{m_2} \varepsilon (1- \delta ) (u_2 - u_1).
\label{veloc}
\end{align}
\end{theorem}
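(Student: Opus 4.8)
The plan is to reduce the full momentum balance to a single vector equation that can then be solved explicitly for $u_{21}$. First I would dispose of the self-interaction contributions. Since $M_1$ is built to share its first three moments with $f_1$ (and likewise $M_2$ with $f_2$), one has $\int v\,M_1\,dv = n_1 u_1 = \int v\,f_1\,dv$, so that $\int m_1 v\,Q_{11}(f_1,f_1)\,dv = \nu_{11} n_1 m_1 (n_1 u_1 - n_1 u_1) = 0$, and identically $\int m_2 v\,Q_{22}(f_2,f_2)\,dv = 0$. Hence the whole balance collapses to the two cross terms $\int m_1 v\,Q_{12}\,dv + \int m_2 v\,Q_{21}\,dv$.

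Next I would evaluate these two integrals using the explicit interspecies Maxwellians \eqref{BGKmix}. Integrating $M_{12}$ against $v$ gives $n_{12} u_{12}$ and integrating $M_{21}$ gives $n_{21} u_{21}$; invoking the density identities \eqref{density}, namely $n_{12}=n_1$ and $n_{21}=n_2$, I obtain
\begin{align*}
\int m_1 v\,Q_{12}\,dv &= \nu_{12} n_2 m_1 (n_1 u_{12} - n_1 u_1) = \nu_{12} n_1 n_2 m_1 (u_{12}-u_1), \\
\int m_2 v\,Q_{21}\,dv &= \nu_{21} n_1 m_2 (n_2 u_{21} - n_2 u_2) = \nu_{21} n_1 n_2 m_2 (u_{21}-u_2).
\end{align*}
Setting the sum to zero and cancelling the common factor $n_1 n_2$ leaves
$$\nu_{12} m_1 (u_{12}-u_1) + \nu_{21} m_2 (u_{21}-u_2) = 0.$$

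Finally I would insert the two structural assumptions. The relation \eqref{coll} gives $\nu_{12}=\varepsilon\nu_{21}$, so after dividing by $\nu_{21}$ the collision frequencies drop out entirely; and the ansatz \eqref{convexvel} yields $u_{12}-u_1 = (1-\delta)(u_2-u_1)$. The balance then reads $\varepsilon m_1 (1-\delta)(u_2-u_1) + m_2(u_{21}-u_2)=0$, which I would solve for $u_{21}$ to recover exactly \eqref{veloc}. I do not expect a genuine obstacle here: the whole argument is a linear computation, and the only points requiring care are the bookkeeping of the prefactors $\nu_{kl} n_k$ and the recognition that $u_{12}$ is already fixed by the ansatz (with $\delta$ still free), so that $u_{21}$ is the quantity to be \emph{determined} rather than a second free parameter.
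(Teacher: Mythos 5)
Your proposal is correct and follows essentially the same route as the paper: the self-interaction terms vanish because $M_1$, $M_2$ share moments with $f_1$, $f_2$, the cross fluxes evaluate to $m_1\nu_{12}n_1n_2(u_{12}-u_1)$ and $m_2\nu_{21}n_1n_2(u_{21}-u_2)$ via \eqref{density}, and imposing a zero sum together with \eqref{coll} and \eqref{convexvel} forces \eqref{veloc}. The only cosmetic difference is that you dispose of the $Q_{11}$, $Q_{22}$ terms explicitly up front, whereas the paper absorbs them silently into its flux computation.
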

\begin{proof}
The flux of momentum of species $1$ is given by
\begin{align}
\begin{split}
f_{m_{1,2}}:= m_1 \int v \nu_{11} n_1 (M_1 - f_1) dv + m_1 \int v \nu_{12}  n_2 (M_{12}- f_1) dv \\= m_1 \nu_{12} n_1 n_2 (u_{12} - u_1)
=
m_1 \nu_{12} n_1 n_2 (1 - \delta) (u_2 - u_1). 
\end{split}
\label{flux_mom_12}
\end{align}
The flux of momentum of species $2$ is given by
\begin{align}
f_{m_{2,1}}
=m_2 \nu_{21} n_2 n_1 (u_{21} - u_2).
\label{flu_mom_21}
\end{align}
In order to get conservation of momentum we therefore need
\begin{align*}
m_1 \nu_{12} n_1 n_2 (1 - \delta) (u_2 - u_1) + m_2 \nu_{21} n_2 n_1 (u_{21} - u_2) = 0 ,
\end{align*}
which holds provided $u_{21}$ satisfies
 \eqref{veloc} under the assumption that $\nu_{12}$ and $\nu_{21}$ satisfy \eqref{coll}.
\end{proof}
\begin{remark}
If we write $\tilde{\varepsilon} = \frac{m_1}{m_2} \varepsilon$ and $\tilde{\delta}= 1- \tilde{\varepsilon}(1-\delta)$ we obtain a similar structure for $u_{21}$ as for $u_{12}$
$$ u_{21} = \tilde{\delta} u_2 + (1- \tilde{\delta}) u_1.$$
\end{remark}
\begin{theorem}[Conservation of total energy]
Assume \eqref{coll}, conditions \eqref{density}, \eqref{convexvel} and \eqref{veloc} and assume that $T_{12}$ is of the following form
\begin{align}
\begin{split}
T_{12}=  \alpha T_1 + ( 1 - \alpha) T_2 + \gamma |u_1 - u_2 | ^2,  \quad 0 \leq \alpha \leq 1, \gamma \geq 0.
\label{contemp}
\end{split}
\end{align}
Then we have conservation of total energy
$$
\int \frac{m_1}{2} |v|^2 (Q_{11}(f_1,f_1)+Q_{12}(f_1,f_2)) dv +
\int \frac{m_2}{2} |v|^2 (Q_{22}(f_2,f_2)+Q_{21}(f_2,f_1)) dv = 0,
$$
\end{theorem}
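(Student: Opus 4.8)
The plan is to follow the template of the momentum proof. First I would dispose of the two self-interaction terms: since $M_1$ and $M_2$ are built to carry the same moments $(n_k,u_k,T_k)$ as $f_k$, properties 1--3 of Section 2.2 give $\int \frac{m_k}{2}|v|^2 Q_{kk}(f_k,f_k)\,dv = \frac{m_k}{2}\nu_{kk} n_k \int |v|^2 (M_k - f_k)\,dv = 0$ for $k=1,2$. Hence the assertion reduces to the cancellation of the two interspecies energy fluxes, i.e. to
$$\int \frac{m_1}{2}|v|^2 Q_{12}(f_1,f_2)\,dv + \int \frac{m_2}{2}|v|^2 Q_{21}(f_2,f_1)\,dv = 0.$$

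Next I would evaluate each flux through the second moment of the relevant Maxwellian. For a Maxwellian of density $n$, mean velocity $u$, temperature $T$ and particle mass $m$, splitting $|v|^2 = |v-u|^2 + 2(v-u)\cdot u + |u|^2$ and invoking \eqref{macrosqu} yields $\int |v|^2 M\,dv = n\bigl(|u|^2 + 3T/m\bigr)$. Applying this to $M_{12}$ (with $n_{12}=n_1$, velocity $u_{12}$, temperature $T_{12}$, mass $m_1$) and to $f_1$, and proceeding exactly as for the momentum flux \eqref{flux_mom_12}, gives
$$f_{E_{1,2}} := \int \frac{m_1}{2}|v|^2\,\nu_{12} n_2 (M_{12}-f_1)\,dv = \frac{\nu_{12} n_1 n_2}{2}\Bigl(m_1\bigl(|u_{12}|^2-|u_1|^2\bigr) + 3(T_{12}-T_1)\Bigr),$$
and symmetrically
$$f_{E_{2,1}} = \frac{\nu_{21} n_1 n_2}{2}\Bigl(m_2\bigl(|u_{21}|^2-|u_2|^2\bigr) + 3(T_{21}-T_2)\Bigr).$$

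Finally I would impose $f_{E_{1,2}} + f_{E_{2,1}} = 0$, cancel the common factor $\tfrac{1}{2}n_1 n_2$, and substitute $\nu_{12} = \varepsilon\nu_{21}$ from \eqref{coll}. The resulting scalar balance contains the single free parameter $T_{21}$, and solving for it forces the choice
$$T_{21} = T_2 - \frac{m_2}{3}\bigl(|u_{21}|^2 - |u_2|^2\bigr) - \frac{\varepsilon}{3}\Bigl(m_1\bigl(|u_{12}|^2 - |u_1|^2\bigr) + 3(T_{12}-T_1)\Bigr),$$
after which one inserts the already fixed forms \eqref{convexvel}, \eqref{veloc} of $u_{12},u_{21}$ and \eqref{contemp} of $T_{12}$ to express $T_{21}$ explicitly in the macroscopic unknowns of the two species.

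The computation is elementary once the second-moment identity is established; I do not anticipate a genuine obstacle. The only point demanding care is the bookkeeping of the $|u|^2$ contributions: unlike in the momentum balance, where the velocity terms assemble into the clean difference $u_{12}-u_1$, here the kinetic-energy terms do not cancel among themselves and must be absorbed into the definition of $T_{21}$. I would also flag that the $T_{21}$ produced this way is not manifestly nonnegative, so its positivity is a distinct question, presumably handled in the later subsection on positivity of the temperatures rather than here.
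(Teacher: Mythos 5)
Your proposal is correct and follows essentially the same route as the paper: both reduce the claim to the cancellation of the two interspecies energy fluxes, evaluate them via the second moments of the Maxwellians (yielding exactly the paper's expressions $F_{E_{1,2}}$ and $F_{E_{2,1}}$), and use \eqref{coll}, \eqref{convexvel}, \eqref{veloc}, \eqref{contemp}. The only difference is direction: you solve the linear balance for $T_{21}$ and then expand to recover \eqref{temp}, while the paper substitutes \eqref{temp} and verifies the sum vanishes --- the same algebra run in reverse, and your expanded expression does reproduce \eqref{temp} exactly.
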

provided that
\begin{align}
\begin{split}
T_{21}=\left[ \frac{1}{3} \varepsilon m_1 (1- \delta) \left( \frac{m_1}{m_2} \varepsilon ( \delta - 1) + \delta +1 \right) - \varepsilon \gamma \right] |u_1 - u_2|^2 \\+ \varepsilon ( 1 - \alpha ) T_1 + ( 1- \varepsilon ( 1 - \alpha)) T_2.
\label{temp}
\end{split}
\end{align}
\begin{proof}
Using the energy flux of species $1$ 
$$
F_{E_1,2}:= \int \frac{m_1}{2} |v|^2 \nu_{11} n_1 (M_1 - f_1) dv + \int \frac{m_1}{2} |v|^2 \nu_{12} n_2 (M_{12}- f_1) dv 
$$$$= \varepsilon \nu_{21} \frac{1}{2} n_2 n_1 m_1( |u_{12}|^2 - |u_1|^2)+ \frac{3}{2} \varepsilon \nu_{21} n_1 n_2  (T_{12} - T_1),  
$$
where we used \eqref{convexvel} and \eqref{contemp}. Analogously the energy flux of species $2$ towards $1$ is 
$$
F_{E_{2,1}}= \nu_{21} m_2 n_1 n_2 ( |u_{21}|^2 - |u_2|^2)+ \frac{3}{2} \nu_{21} n_1 n_2 ( T_{21} -T_2). 
$$
Substitute $u_{21}$ with \eqref{veloc} and $T_{21}$ from \eqref{temp}. This permits to rewrite the energy fluxes as 
\begin{align}
\begin{split}
F_{E_{1,2}}= \varepsilon \nu_{21} \frac{1}{2} n_2 n_1 m_1\left[(\delta^2-1) |u_1|^2 + (1- \delta)^2 |u_2|^2 + 2 \delta ( 1- \delta) u_1 \cdot u_2 \right]\\ + \frac{3}{2} \varepsilon \nu_{21} n_1 n_2  \left[(1-\alpha) (T_2-T_1) + \gamma |u_1-u_2|^2\right],
\end{split}
 \label{flux_en_12}
\end{align}
\begin{align}
\begin{split}
F_{E_{2,1}} =\frac{1}{2} \nu_{21} m_2 n_1 n_2  \big[ \left( (1-\frac{m_1}{m_2} \varepsilon (1- \delta) )^2 -1 \right) |u_2|^2 
+ \left( \frac{m_1}{m_2} \varepsilon (\delta - 1) \right)^2 |u_1|^2  \\
+ 2 ( 1- \frac{m_1}{m_2} \varepsilon (1-\delta)) \frac{m_1}{m_2} \varepsilon ( 1- \delta) u_1 \cdot u_2 
\big] 
+ \frac{3}{2} \nu_{21} n_1 n_2 \big[ \varepsilon ( 1- \alpha) (T_1-T_2)\\ + \left( \frac{1}{3} \varepsilon m_1 (1- \delta) \left( \frac{m_1}{m_2} \varepsilon ( \delta - 1) + \delta +1 \right) - \varepsilon \gamma \right) |u_1 - u_2|^2 \big].
\end{split}
\label{flux_en_21}
\end{align}
Adding these two terms, we see that the total energy is conserved.
\end{proof}
\begin{remark}
We have $0 \leq 1-\varepsilon (1 - \alpha) \leq 1$ and  $0 \leq \varepsilon (1- \alpha) \leq 1$, so that in \eqref{temp} the two terms with the temperatures are also a convex combination of $T_1$ and $T_2$.  
\end{remark}
\begin{remark}
The remaining free parameters can be fixed for specific situations. For example, if we see the parameters $\alpha, \delta, \gamma$ and $\varepsilon$ from the model presented in this paper as functions of the masses $m_1$ and $m_2$, we can get more restrictions on these parameters by physical considerations. 
\begin{itemize}
\item In the limit $\frac{m_1}{m_1+m_2}\rightarrow 0$, we expect that $u_{12}=u_2$ and $T_{12}=T_2$, since we expect that light particles are driven by the flow of the heavy particles, so they adapt the velocity and the fluctuations to the mean velocity of the heavy particles. If we look at \eqref{convexvel}, \eqref{veloc}, \eqref{contemp} and \eqref{temp}, the definitions of $u_{12}, u_{21}, T_{12}$ and $T_{21}$, we see in order to realize this, we need $\delta \rightarrow 0, \alpha \rightarrow 0$ and $\gamma \rightarrow 0$.
\item In the limit $\frac{m_1}{m_1+m_2} \rightarrow \frac{1}{2}$, when the mass of the particles become indistinguishable, we expect $T_{12}=T_{21}$ and $u_{12}=u_{21}$. For this we need $\delta \rightarrow  \frac{\varepsilon}{1+ \varepsilon}$, $\alpha  \rightarrow \frac{\varepsilon}{1+ \varepsilon}$ and $\gamma \rightarrow \frac{1}{3} m_1 \frac{\varepsilon}{(1-\varepsilon)^2}$. 
\item In the limit $\frac{m_1}{m_1+m_2}\rightarrow 1$, the heavy particles don't feel the other particles, so we expect that we have no change in the mean velocity and in the temperature, e.g $u_{12}=u_1$ and $T_{12}=T_1$. Here we need $\delta \rightarrow 1, \alpha \rightarrow 1$ and $\gamma \rightarrow 0$.
\end{itemize}
\label{consider}
\end{remark}
\subsection{Positivity of the distribution function}
We want to show that in the space homogeneous case positive initial values of the distribution functions stay non-negative when their time evolution is described by the two species BGK model described in this paper. 

\begin{theorem}[Non-negative solutions of the BGK equation for two species]
Assume $f_1(\cdot,t), f_2(\cdot,t) \in C^1( \mathbb{R}^3)$ and $f_1(v,\cdot),$ $ f_2(v, \cdot)\in C^1(\mathbb{R}^+_0)$, $\nu_{11}(t),$ $ \nu_{12}(t),$ $ \nu_{21}(t),$ $ \nu_{22}(t)\geq 0$ and we have no external forces. If $f_1(v,0), f_2(v,0)\geq 0$, we have $f_1(v,t), f_2(v,t)> 0$ for every $t\geq 0$.
\end{theorem}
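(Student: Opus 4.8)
The plan is to exploit the space-homogeneous, force-free structure, where each kinetic equation becomes, at every fixed velocity $v$, a scalar linear ODE in $t$ driven by a source assembled from the Maxwellians. Dropping the transport and force terms, the first equation reads
\[
\partial_t f_1(v,t) = \nu_{11} n_1 M_1 + \nu_{12} n_2 M_{12} - (\nu_{11} n_1 + \nu_{12} n_2)\, f_1 ,
\]
and symmetrically for $f_2$. I would introduce the loss rate $L_1(t) := \nu_{11}(t) n_1(t) + \nu_{12}(t) n_2(t)$ and the gain $G_1(v,t) := \nu_{11} n_1 M_1 + \nu_{12} n_2 M_{12}$, so that the equation takes the form $\partial_t f_1 + L_1 f_1 = G_1$, and likewise $\partial_t f_2 + L_2 f_2 = G_2$ with $L_2 = \nu_{22} n_2 + \nu_{21} n_1$ and $G_2 = \nu_{22} n_2 M_2 + \nu_{21} n_1 M_{21}$.

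First I would record that, by conservation of the number of each species, in the homogeneous case $\partial_t n_k = 0$, so $n_1(t)\equiv n_1(0)$ and $n_2(t)\equiv n_2(0)$ remain constant and non-negative (indeed strictly positive whenever the initial data are not identically zero). Combined with the hypothesis $\nu_{k\ell}(t)\ge 0$, this makes each $L_k(t)$ a well-defined, non-negative, continuous function of time alone. I would then integrate using the integrating factor $\exp\!\big(\int_0^t L_1\,ds\big)$ to obtain the Duhamel representation
\[
f_1(v,t) = f_1(v,0)\, e^{-\int_0^t L_1(s)\,ds} + \int_0^t e^{-\int_s^t L_1(\tau)\,d\tau}\, G_1(v,s)\,ds .
\]
Both terms are non-negative: the first because $f_1(v,0)\ge 0$, the second because the exponential is strictly positive and $G_1\ge 0$, the Maxwellians being Gaussians with non-negative amplitudes ($n_1$ and $n_{12}=n_1$). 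Strict positivity for $t>0$ then follows, since the gain is a strictly positive Gaussian wherever $\nu_{11}n_1>0$, so the integral contributes a positive amount at every $v$; the identical argument for $f_2$ closes the proof.

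The hard part will be ensuring that $G_1$ and $G_2$ are genuinely non-negative throughout the evolution, which requires that all four Maxwellians be well-defined, i.e.\ that every temperature appearing in them stay positive. For $M_1$, $M_2$ and $M_{12}$ this is immediate, since $T_{12}=\alpha T_1+(1-\alpha)T_2+\gamma|u_1-u_2|^2$ is a convex combination plus a non-negative term; the delicate case is $M_{21}$, whose temperature $T_{21}$ in \eqref{temp} carries a $|u_1-u_2|^2$ coefficient that is not manifestly positive. This is precisely the content of the positivity-of-temperatures analysis, on which this step relies. A secondary subtlety is the apparent circularity that $G_k$ depends on the unknowns through their moments; but since positivity of the densities follows from conservation rather than from the pointwise bound being proven, the representation formula is a legitimate a priori estimate for any $C^1$ solution with positive temperatures, and granting the temperature positivity the Duhamel bound yields the claim.
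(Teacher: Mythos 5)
Your proof is correct and follows essentially the same route as the paper's own: the paper likewise uses conservation of mass to fix $n_1,n_2$ in the space-homogeneous setting, multiplies by the integrating factor $e^{\alpha(t)}$ with $\alpha(t)=\int_0^t\left(\nu_{11}(s)n_1+\nu_{12}(s)n_2\right)ds$, and reads off positivity from the resulting Duhamel representation, exactly as you do. Your closing observation --- that non-negativity of the gain term silently requires the interspecies temperatures (in particular $T_{21}$, hence condition \eqref{gamma}) to be positive so that $M_{21}$ is a genuine Maxwellian --- is a real subtlety that the paper's proof leaves implicit, but it is a refinement of the same argument rather than a different one.
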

\begin{proof}
In the space-homogeneous case we get from the conservation properties that $n_1$ and $n_2$ are constant in time.
Rewrite \eqref{BGK} in the space homogeneous case as
\begin{align}
\begin{split}
\partial_t f_1(v,t) +  \nu_{11}(t) n_1 f_1(v,t) + \nu_{12}(t) n_2  f_1(v,t) \\= \nu_{11}(t) n_1 M_1(v,t) + \nu_{12}(t) n_2 M_{12}(v,t) .
\label{equation}
\end{split}
\end{align}
Define $g_1(v,t)= f_1(v,t) e^{\alpha(t)}$ for some differentiable function $\alpha(t)$ determined later. Then 
\begin{align*}
\partial_t g_1(v,t) &= \partial_t f_1(v,t) e^{\alpha (t)} + f_1(v,t) e^{\alpha(t)} \partial_t \alpha(t)\\&= \partial_t f_1(v,t) e^{\alpha (t)} + g_1(v,t) \partial_t \alpha(t).
\end{align*}

By using \eqref{equation} we get
\begin{align*}
\partial_t g_1(v,t) =&-(\nu_{11}(t) n_1 + \nu_{12}(t) n_2)  g_1(v,t) \\&+ (\nu_{11}(t) n_1 M_1(v,t)+ \nu_{12}(t) n_2 M_{12}(v,t) ) e^{\alpha(t)}+ g_1(v,t) \partial_t\alpha(t).
\end{align*}
Now choose $\alpha(t)$ such that
$$ \partial_t \alpha(t) = \nu_{11}(t) n_1 + \nu_{12}(t) n_2 \quad \text{and} \quad \alpha(0)=0,$$ so we choose $\alpha(t)$ as 
$$ \alpha(t) = \int_0^t \nu_{11}(s) n_1 + \nu_{12}(s) n_2 ds.$$
The initial value of $\alpha$ is chosen such that $f_1$ and $g_1$ have the same initial values.
Then $g_1$ solves
\begin{align*}
\partial_t g_1(v,t) = (\nu_{11}(t) n_1 M_1(v,t) + \nu_{12}(t) n_2 M_{12}(v,t) ) e^{\alpha(t)},
\end{align*}
or in integral form
{\small
\begin{align*}
g_1(v,t) &= g_{1}(v,0)+ \int_0^t [\nu_{11}(s) n_1 M_1(v,t) + \nu_{12}(s) n_2 M_{12}(v,t)] e^{\alpha(s)} ds,
\end{align*}}
so
{\small
\begin{align*}
e^{\alpha(t)} &f_1(v,t) = f_{1}(v,0) + \int_0^t [\nu_{11}(s) n_1 M_1(v,t) + \nu_{12}(s) n_2 M_{12}(v,t) ] e^{\alpha(s)} ds .
\end{align*}}
 Since we assumed $\nu_{11}(t), \nu_{12}(t), \nu_{21}(t), \nu_{22}(t) \geq 0$ 
for every $ t \leq t_0$ and positive initial values, all terms on the right-hand side are positive. Hence $f_1$ is positive. \\
Similar for $f_2$.
\end{proof}
\subsection{Positivity of the temperatures}
\begin{theorem}
Assume that $f_1(x,v,t), f_2(x,v,t) > 0$. Then all temperatures $T_1$, $T_2$, $T_{12}$ given by \eqref{contemp} and $T_{21}$ given by \eqref{temp} are positive provided that 
 \begin{align}
0 \leq \gamma \leq \frac{m_1}{3} (1-\delta) \left[(1 + \frac{m_1}{m_2} \varepsilon ) \delta + 1 - \frac{m_1}{m_2} \varepsilon \right] .
 \label{gamma}
 \end{align}
\end{theorem}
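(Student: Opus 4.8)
The plan is to treat the four temperatures in increasing order of difficulty, reducing everything to a single observation: a quantity that is the sum of a convex combination of the positive numbers $T_1,T_2$ and a \emph{nonnegative} multiple of $|u_1-u_2|^2$ is automatically positive.

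First I would establish $T_1,T_2>0$ directly from the hypothesis $f_1,f_2>0$. By the moment definitions \eqref{macrosqu} we have $n_k=\int f_k\,dv>0$ and $3n_kT_k=\int m_k|v-u_k|^2 f_k\,dv$ for $k=1,2$; since the integrand is nonnegative and strictly positive on a set of positive measure (a positive $f_k$ cannot be a point mass concentrated at $u_k$), it follows that $T_k>0$. Positivity of $T_{12}$ is then immediate: by \eqref{contemp} with $0\le\alpha\le1$ the term $\alpha T_1+(1-\alpha)T_2$ is a convex combination of positive numbers, hence positive, and adding $\gamma|u_1-u_2|^2\ge0$ (using the left inequality $\gamma\ge0$ of \eqref{gamma}) preserves positivity.

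The only real work is $T_{21}>0$. Here I would split the expression \eqref{temp} into its temperature part $\varepsilon(1-\alpha)T_1+(1-\varepsilon(1-\alpha))T_2$ and its coefficient of $|u_1-u_2|^2$. For the temperature part I would invoke the Remark following the energy-conservation theorem, which records $0\le\varepsilon(1-\alpha)\le1$; this again makes it a convex combination of $T_1,T_2$, hence positive. It then remains to show that the bracketed coefficient of $|u_1-u_2|^2$,
$$C:=\frac{1}{3}\varepsilon m_1(1-\delta)\left(\frac{m_1}{m_2}\varepsilon(\delta-1)+\delta+1\right)-\varepsilon\gamma,$$
is nonnegative. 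The key algebraic step, and really the whole content of the statement, is to recognize that factoring out $\varepsilon>0$ yields $C=\varepsilon\big[\tfrac{m_1}{3}(1-\delta)\big((1+\tfrac{m_1}{m_2}\varepsilon)\delta+1-\tfrac{m_1}{m_2}\varepsilon\big)-\gamma\big]$, since expanding the inner bracket gives the identity $(1+\tfrac{m_1}{m_2}\varepsilon)\delta+1-\tfrac{m_1}{m_2}\varepsilon=\tfrac{m_1}{m_2}\varepsilon(\delta-1)+\delta+1$. Thus $C\ge0$ is \emph{precisely} the upper bound in \eqref{gamma}. With $C\ge0$ the coefficient of $|u_1-u_2|^2$ in \eqref{temp} is nonnegative, and $T_{21}$ is a sum of this nonnegative term and a positive convex combination, so $T_{21}>0$.

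I expect the main obstacle to be purely bookkeeping rather than analysis: carefully matching the two differently-written expressions for the $|u_1-u_2|^2$ coefficient appearing in \eqref{temp} and in the hypothesis \eqref{gamma}, and remembering to draw the sign bounds $0\le\varepsilon(1-\alpha)\le1$ from the preceding Remark instead of re-deriving them. Beyond the nonnegativity of $|v-u_k|^2$ and of $|u_1-u_2|^2$, no genuine estimate is required.
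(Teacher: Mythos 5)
Your proposal is correct and takes essentially the same route as the paper's proof: $T_1,T_2>0$ as integrals with positive integrands, $T_{12}>0$ as a convex combination of $T_1,T_2$ plus a nonnegative multiple of $|u_1-u_2|^2$, and $T_{21}>0$ by checking that its temperature coefficients $\varepsilon(1-\alpha)$ and $1-\varepsilon(1-\alpha)$ form a convex combination (the paper re-derives this via $\alpha\geq 1-\tfrac{1}{\varepsilon}$, $\varepsilon\leq 1$, where you cite the Remark recording the same fact) and that nonnegativity of the coefficient of $|u_1-u_2|^2$ is exactly the upper bound in \eqref{gamma}. Your explicit factoring of $\varepsilon$ and matching of the two bracketed expressions merely spells out the algebra that the paper asserts without computation.
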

\begin{proof}
$T_1$ and $T_2$ are positive as integrals of positive functions. $T_{12}$ is positive because by construction it is a convex combination of $T_1$ and $T_2$. For $T_{21}$ we consider the coefficients in front of $|u_1-u_2|^2$, $T_1$ and $T_2$. The term in front of $T_1$ is positive by definition. The positivity of the term in front of $T_2$ is equivalent to the condition $\alpha \geq 1- \frac{1}{\varepsilon}$, which is satisfied since $\varepsilon \leq1$, the positivity of the term in front of $|u_1-u_2|^2$ is equivalent to the condition \eqref{gamma}.
\end{proof}
\begin{remark}
According to the definition of $\gamma$, $\gamma$ is a non-negative number, so the right-hand side of the inequality in \eqref{gamma} must be non-negative. This condition is equivalent to 
\begin{align}
 \frac{ \frac{m_1}{m_2}\varepsilon - 1}{1+\frac{m_1}{m_2}\varepsilon} \leq  \delta \leq 1 .
\label{gammapos}
\end{align}
If the collision frequencies are linked as in \eqref{coll}, $\varepsilon = \frac{m_2}{m_1}$, then the right-hand side of \eqref{gamma} is always positive.
\end{remark}
\subsection{H-theorem for mixtures}
\begin{remark}
From the case of one species BGK model we know that
$$ \int M_k \ln M_k dv \leq \int f_k \ln f_k dv,$$ for $k=1,2$, see for example problem 1.7.1 in \cite{Cercignani}.
\label{one}
\end{remark}
\begin{lemma}
Assuming \eqref{contemp} and \eqref{temp} and the positivity of the temperatures  \eqref{gamma}, we have the following inequality
\begin{align}
\varepsilon \ln T_{12} + \ln T_{21}  \geq \varepsilon \ln T_1 + \ln T_2.
\label{lemma}
\end{align}
\label{inequ}
\end{lemma}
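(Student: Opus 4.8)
The plan is to reduce the claimed inequality \eqref{lemma} to the two most elementary properties of the logarithm, namely monotonicity and concavity. It is convenient to note first that \eqref{lemma} is equivalent, after exponentiating, to the multiplicative statement $T_{12}^{\varepsilon}\, T_{21} \geq T_1^{\varepsilon}\, T_2$. Since all four temperatures are strictly positive by the positivity result established just above, every logarithm below is well defined and this reformulation is legitimate.

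The key structural observation is that each interspecies temperature is a convex combination of $T_1$ and $T_2$ plus a non-negative multiple of $|u_1-u_2|^2$. From \eqref{contemp}, the temperature part of $T_{12}$ is $\alpha T_1 + (1-\alpha)T_2$, a genuine convex combination since $0\leq\alpha\leq 1$, while the extra term $\gamma|u_1-u_2|^2$ is non-negative because $\gamma\geq 0$. From \eqref{temp}, the temperature part of $T_{21}$ is $\varepsilon(1-\alpha)T_1 + (1-\varepsilon(1-\alpha))T_2$, again a convex combination since $0\leq\varepsilon(1-\alpha)\leq 1$ (the content of the remark following \eqref{temp}), whereas the coefficient of $|u_1-u_2|^2$ is non-negative precisely under the positivity hypothesis \eqref{gamma}; indeed \eqref{gamma} is equivalent to that coefficient being $\geq 0$.

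From here I would proceed in two steps. First, discard the velocity-difference terms using monotonicity of $\ln$:
\begin{align*}
\ln T_{12} &\geq \ln\big(\alpha T_1 + (1-\alpha)T_2\big), \\
\ln T_{21} &\geq \ln\big(\varepsilon(1-\alpha)T_1 + (1-\varepsilon(1-\alpha))T_2\big).
\end{align*}
Second, apply concavity of $\ln$ (Jensen's inequality) to each convex combination:
\begin{align*}
\ln T_{12} &\geq \alpha\ln T_1 + (1-\alpha)\ln T_2, \\
\ln T_{21} &\geq \varepsilon(1-\alpha)\ln T_1 + (1-\varepsilon(1-\alpha))\ln T_2.
\end{align*}
Multiplying the first inequality by $\varepsilon$ and adding the second, the coefficient of $\ln T_1$ collapses to $\varepsilon\alpha + \varepsilon(1-\alpha)=\varepsilon$ and the coefficient of $\ln T_2$ collapses to $\varepsilon(1-\alpha) + (1-\varepsilon(1-\alpha))=1$, which is exactly $\varepsilon\ln T_1 + \ln T_2$, establishing \eqref{lemma}.

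I do not anticipate a genuine obstacle: the argument is essentially a one-line Jensen estimate once the decomposition is spotted. The only points requiring care are, first, verifying that the two weight pairs are indeed convex, which is where the bounds $0\leq\alpha\leq1$ and $0\leq\varepsilon(1-\alpha)\leq1$ enter, and second, checking that hypothesis \eqref{gamma} is exactly what makes the $|u_1-u_2|^2$ coefficient in $T_{21}$ non-negative, since this is what licenses dropping that term by monotonicity; without \eqref{gamma} the lower bound for $\ln T_{21}$ could fail. The final coefficient bookkeeping is routine, but I would carry it out explicitly to confirm the exact cancellation of the $\alpha$-dependent terms.
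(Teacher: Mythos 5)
Your proposal is correct and follows essentially the same route as the paper's own proof: both arguments first drop the non-negative $|u_1-u_2|^2$ terms using monotonicity of the logarithm (which is precisely where hypothesis \eqref{gamma} enters), then apply concavity of $\ln$ to the convex combinations $\alpha T_1+(1-\alpha)T_2$ and $\varepsilon(1-\alpha)T_1+(1-\varepsilon(1-\alpha))T_2$, and finish with the same coefficient bookkeeping yielding $\varepsilon\ln T_1+\ln T_2$. There is no gap; your explicit remark that \eqref{gamma} is exactly the non-negativity of the $|u_1-u_2|^2$ coefficient in $T_{21}$ matches the paper's positivity theorem.
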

\begin{proof}
We start with the left-hand side of \eqref{inequ}. First we insert the definition of $T_{12}$ and $T_{21}$ from  \eqref{contemp} and \eqref{temp}. Since $\gamma$ and the term in front of $|u_1-u_2|^2$ in \eqref{temp} are positive, we can use  the monotonicity of the logarithm and get
\begin{align*}
\varepsilon &\ln T_{12} + \ln T_{21}\\&= 
\varepsilon \ln \left[ \alpha T_1 + (1- \alpha) T_2 + \gamma |u_1 - u_2|^2 \right] \\ &+ \ln \big[\frac{1}{3} \varepsilon m_1 (1- \delta) ( \frac{m_1}{m_2} \varepsilon ( \delta - 1) + \delta +1) - \varepsilon \gamma ) |u_1 - u_2|^2 \\&+ \varepsilon ( 1 - \alpha ) T_1 + ( 1- \varepsilon ( 1 - \alpha)) T_2 \big] \\ &\geq \varepsilon \ln( \alpha T_1 + (1- \alpha) T_2) + \ln (\varepsilon ( 1 - \alpha ) T_1 + ( 1- \varepsilon ( 1 - \alpha)) T_2) .
\end{align*}
If we now use the  concavity of the logarithm and the assumptions $0 \leq \alpha \leq 1, \varepsilon <1$, the expression above can be bounded from below by
\begin{align*}
\varepsilon \alpha \ln T_1 + \varepsilon (1-\alpha) \ln T_1 + (1- \varepsilon (1- \alpha)) \ln T_2 +\varepsilon(1- \alpha) \ln T_2,
\end{align*}
which gives the inequality stated in lemma \ref{inequ}.
\end{proof}
\begin{theorem}[H-theorem for mixture]
Assume $f_1, f_2 >0$.
Assume the relationship between the collision frequencies \eqref{coll} , the conditions for the interspecies Maxwellians \eqref{density} , \eqref{convexvel}, \eqref{veloc}, \eqref{contemp} and \eqref{temp} with $\alpha, \delta \neq 1$ and the positivity of the temperatures \eqref{gamma}, then
\begin{align*}
\int (\ln f_1) ~ Q_{11}(f_1,f_1) &+ (\ln f_1) ~ Q_{12}(f_1,f_2) dv \\&+ \int (\ln f_2) ~ Q_{22}(f_2, f_2)+ (\ln f_2) ~ Q_{21}(f_2, f_1) dv\leq 0 ,
\end{align*}
with equality if and only if $f_1$ and $f_2$ are Maxwell distributions with equal velocity and temperature. 
\label{H-theorem}
\end{theorem}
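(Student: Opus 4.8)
The plan is to reduce the whole entropy production to a difference of \emph{Maxwellian} entropies, so that the single–species inequality of Remark~\ref{one} and the temperature inequality of Lemma~\ref{inequ} do all the work. First I would insert the four BGK operators, so that the left–hand side becomes
\[
\nu_{11}n_1\!\int\!\ln f_1\,(M_1-f_1)\,dv+\nu_{12}n_2\!\int\!\ln f_1\,(M_{12}-f_1)\,dv+\nu_{22}n_2\!\int\!\ln f_2\,(M_2-f_2)\,dv+\nu_{21}n_1\!\int\!\ln f_2\,(M_{21}-f_2)\,dv .
\]
For each of the four integrals I would write $\int \ln f\,(M-f)\,dv=\int M\ln f\,dv-\int f\ln f\,dv$ and apply the Gibbs inequality $\int M\ln f\,dv\le\int M\ln M\,dv$. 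This is legitimate precisely because every Maxwellian shares its mass with the corresponding $f$: one has $\int M_1\,dv=\int f_1\,dv=n_1$, and by \eqref{density} also $\int M_{12}\,dv=n_{12}=n_1$, and symmetrically for species $2$. Each term is thereby bounded by $\int M\ln M\,dv-\int f\ln f\,dv$.

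Next I would collect coefficients. The terms $\int f_1\ln f_1\,dv$ and $\int f_2\ln f_2\,dv$ appear with coefficients $-(\nu_{11}n_1+\nu_{12}n_2)$ and $-(\nu_{22}n_2+\nu_{21}n_1)$. Here I would invoke the one–species entropy minimisation of Remark~\ref{one}, $\int f_k\ln f_k\,dv\ge\int M_k\ln M_k\,dv$, to replace each $-\int f_k\ln f_k\,dv$ by $-\int M_k\ln M_k\,dv$ carrying its full coefficient. After the cancellations the bound collapses to
\[
\nu_{12}n_2\Big(\int M_{12}\ln M_{12}\,dv-\int M_1\ln M_1\,dv\Big)+\nu_{21}n_1\Big(\int M_{21}\ln M_{21}\,dv-\int M_2\ln M_2\,dv\Big).
\]
Since the entropy of a Maxwellian depends on its mean velocity only through a translation (which leaves the integral invariant) and on its temperature only through $-\tfrac32\ln T$, all the $u$–dependence together with the density and mass constants cancel in each difference, leaving $\tfrac32 n_1\ln(T_1/T_{12})$ and $\tfrac32 n_2\ln(T_2/T_{21})$. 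Using $\nu_{12}=\varepsilon\nu_{21}$ from \eqref{coll}, the bound becomes $\tfrac32\,\nu_{21}n_1n_2\big(\varepsilon\ln(T_1/T_{12})+\ln(T_2/T_{21})\big)$, which is nonpositive by Lemma~\ref{inequ}; hence the claimed inequality follows.

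I expect the main obstacle to be the \emph{choice} of this particular chain of inequalities. A naive relative–entropy split of the interspecies terms against $M_{12}$ and $M_{21}$ leaves a residual proportional to $(T_1/T_{12}-1)$ and $(T_2/T_{21}-1)$ together with positive velocity contributions, and this residual is \emph{not} sign–definite, so it cannot establish the statement on its own. Routing through the one–species entropy bound is exactly what cancels the velocity terms and reduces everything to the logarithmic temperature inequality \eqref{lemma}. Finally, for the equality case I would trace the equalities back through the chain: Gibbs equality in the four terms forces $f_1=M_1=M_{12}$ and $f_2=M_2=M_{21}$, so $f_1,f_2$ are Maxwellians; the identity $f_1=M_{12}$ gives $u_1=u_{12}$ and $T_1=T_{12}$, and since $\delta,\alpha\neq1$ the relations \eqref{convexvel} and \eqref{contemp} then force $u_1=u_2$ and $T_1=T_2$, i.e.\ equal velocities and temperatures (which also makes Lemma~\ref{inequ} hold with equality).
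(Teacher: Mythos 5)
Your proposal is correct and follows essentially the same route as the paper's proof: the same convexity/Gibbs inequality (legitimate because of the mass-matching condition \eqref{density}), the one-species entropy bound of Remark \ref{one}, the explicit formula for the entropy of a Maxwellian, and Lemma \ref{inequ} as the final logarithmic temperature inequality, with the identical trace-back of the equality case via $f_1=M_1=M_{12}$, $f_2=M_2=M_{21}$ and $\alpha,\delta\neq 1$. The only difference is organizational --- the paper first discards the intra-species terms using the single-species H-theorem and applies Remark \ref{one} at the very end, whereas you bound all four relaxation terms uniformly by Gibbs and then invoke Remark \ref{one} when collecting coefficients --- which does not change the substance of the argument.
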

\begin{proof}
The fact that $\int \ln f_k Q(f_k,f_k) \leq 0$, $k=1,2$ is shown in proofs of the H-theorem of the single BGK-model, for example in \cite{struchtrup}.
In both cases we have equality if and only if $f_1=M_1$ and $f_2 = M_2$. \\
Let us define 
$$
S(f_1,f_2) :=\nu_{12} n_2 \int \ln f_1  ( M_{12}- f_1 )dv + \nu_{21} n_1 \int \ln f_2  ( M_{21}- f_2)dv .
$$
The task is to prove that $S(f_1, f_2)\leq 0$.
Since the function $H(x)= x \ln x -x$ is strictly convex for $x>0$, we have $H'(f) (g-f) \leq H(g) - H(f)$ with equality if and only if $g=f$. So \begin{align}
(g-f) \ln f  \leq g \ln g - f \ln f +f -g .
\label{convex}
\end{align}
Consider now $S(f_1,f_2)$ and apply the inequality \eqref{convex} to each of the two terms in $S$.
$$
S \leq
\nu_{12} n_2 \left[ \int M_{12} \ln M_{12}  dv - \int f_1 \ln f_1 dv - \int M_{12} dv + \int f_1 dv\right]$$ $$ + \nu_{21} n_1 \left[\int M_{21} \ln M_{21} dv - \int f_2 \ln f_2 dv - \int M_{21} dv + \int f_2 dv \right], 
$$
with equality if and only if $f_1=M_{12}$ and $f_2=M_{21}$. Then $u_{12}= \delta u_1 + (1- \delta) u_2 = u_1$ from which we can deduce $u_1=u_2=u_{21}=u_{12}$ and $T_1=T_2=T_{12}=T_{21}$. This means $f_1$ and $f_2$ are Maxwell distributions with equal bulk velocity and temperature. \\
Since $M_{12}$ and $f_1$ have the same density and $M_{21}$ and $f_2$ have the same density, too,  the right-hand side reduces to
$$
\nu_{12} n_2 ( \int M_{12} \ln M_{12} dv - \int f_1 \ln f_1 dv )+ \nu_{21} n_1 (\int M_{21} \ln M_{21} dv - \int f_2 \ln f_2 dv ) .
$$
Since $\int M \ln M dv = n \ln(\frac{n}{\sqrt{\frac{2 \pi T}{m}}^3})- \frac{3}{2} n$ for $M=\frac{n}{\sqrt{\frac{2 \pi T}{m}}^3}e^{-\frac{|v-u|^2}{\frac{2T}{m}}},$ we will have that
$$
\nu_{12} n_2  \int M_{12} \ln M_{12}  dv + \nu_{21} n_1 \int M_{21} \ln M_{21}  dv $$$$\leq \nu_{21} n_1 \int M_2 \ln M_2  dv  + \nu_{12} n_2  \int M_1 \ln M_1  dv, 
$$
provided that
\begin{align*}
\nu_{12} n_2 n_1 \ln \frac{n_1}{\sqrt{2 \pi \frac{T_{12}}{m_1}}^3} +\nu_{21} n_2 n_1 \ln \frac{n_2}{\sqrt{2 \pi \frac{T_{21}}{m_2}}^3} \\ \leq \nu_{12} n_2 n_1 \ln \frac{n_1}{\sqrt{2 \pi \frac{T_1}{m_1}}^3} +\nu_{21} n_2 n_1 \ln \frac{n_2}{\sqrt{2 \pi \frac{T_2}{m_2}}^3},
\end{align*}
which is equivalent to the condition \eqref{lemma} proven in Lemma \ref{inequ}.
\\
With this inequality we get
\begin{align*}
S(f_1,f_2) \leq
&\nu_{12} n_2 [ \int M_1 \ln M_{1}  dv - \int f_1 \ln f_1 dv ]\\&+ \nu_{21} n_1 [ M_2 \ln M_{2} dv - \int f_2 \ln f_2 dv ] \leq 0 .
\end{align*}
The last inequality follows from remark (\ref{one}). Here we also have equality if and only if $f_1=M_1$ and $f_2=M_2$, but since we already noticed that equality also implies $f_1=M_{12}$ and $f_2=M_{21}$, we also have $T_{21}=T_2=T_1=T_{12}$ and $u_1=u_2=u_{12}=u_{21}$.

\end{proof}
Define the total entropy $H(f_1,f_2) = \int (f_1 \ln f_1 + f_2 \ln f_2) dv$. We can compute 
$$ \partial_t H(f_1,f_2) + \nabla_x \cdot \int ( f_1 \ln f_1 + f_2 \ln f_2 ) v dv + \int F_i \nabla_v ( f_1 \ln f_1 + f_2 \ln f_2 ) dv = S(f_1,f_2),$$ by multiplying the BGK equation for the species $1$ by $\ln f_1$, the BGK equation for the species $2$ by $\ln f_2$ and integrating the sum with respect to $v$.
 \begin{corollary}[Entropy inequality for mixtures]
Assume $f_1, f_2 >0$. Assume $\nabla_v \cdot F_i =0$ and a fast enough decay of $f$ to zero for $v\rightarrow \infty$.
Assume relationship \eqref{coll}, the conditions \eqref{density} , \eqref{convexvel}, \eqref{veloc}, \eqref{contemp} and \eqref{temp} with $\alpha, \delta \neq 1$ and the positivity of the temperatures \eqref{gamma} , then we have the following entropy inequality
$$
\partial_t \left(\int   f_1 \ln f_1  dv + \int f_2 \ln f_2 dv \right) + \nabla_x \cdot \left(\int  v f_1 \ln f_1  dv + \int v f_2 \ln f_2 dv \right) \leq 0,
$$
with equality if and only if $f_1$ and $f_2$ are Maxwell distributions with equal bulk velocity and temperature. Moreover at equilibrium the interspecies Maxwellians $M_{12}$ and $M_{21}$ satisfy $u_{12} = u_2=u_1= u_{21}$ and $T_{12} = T_2=T_1=T_{21}$.
\end{corollary}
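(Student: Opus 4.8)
The plan is to convert the kinetic system \eqref{BGK} into a macroscopic entropy balance and then read off the sign of the production term from the already-established H-theorem. Concretely, I would multiply the first equation of \eqref{BGK} by $\ln f_1$, the second by $\ln f_2$, add them, and integrate in $v$. The time and transport contributions reorganize through $\int \ln f_k\,\partial_t f_k\,dv = \partial_t \int f_k \ln f_k\,dv - \partial_t n_k$ and $\int \ln f_k\,\nabla_x\cdot(v f_k)\,dv = \nabla_x\cdot\int v f_k \ln f_k\,dv - \nabla_x\cdot(n_k u_k)$. The leftover terms $-\partial_t n_k - \nabla_x\cdot(n_k u_k)$ cancel: integrating \eqref{BGK} in $v$ and using the conservation of the number of each species (the density conditions \eqref{density}) yields the continuity equation $\partial_t n_k + \nabla_x\cdot(n_k u_k)=0$. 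This reproduces exactly the identity displayed immediately before the statement.

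The next step is to dispose of the force contribution. Under the hypotheses $\nabla_v\cdot F_i=0$ and sufficiently fast decay of $f_k$ as $|v|\to\infty$, integration by parts gives $\int F_i\cdot\nabla_v(f_k\ln f_k)\,dv = -\int (\nabla_v\cdot F_i)\, f_k\ln f_k\,dv = 0$, so the force term drops out. What remains is
$$\partial_t\Big(\int f_1\ln f_1\,dv+\int f_2\ln f_2\,dv\Big)+\nabla_x\cdot\Big(\int v f_1\ln f_1\,dv+\int v f_2\ln f_2\,dv\Big)=R,$$
where $R$ is the total collisional entropy production, i.e. precisely the left-hand side of Theorem~\ref{H-theorem} with $Q_{kk}=\nu_{kk}n_k(M_k-f_k)$ and the cross operators defined analogously.

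Finally I would invoke Theorem~\ref{H-theorem} directly: under \eqref{coll}, \eqref{density}, \eqref{convexvel}, \eqref{veloc}, \eqref{contemp} and \eqref{temp} with $\alpha,\delta\neq 1$ and the positivity \eqref{gamma}, one has $R\le 0$, which is the asserted entropy inequality. The equality characterization is inherited verbatim: $R=0$ forces simultaneously $f_1=M_1,\ f_2=M_2$ (from the single-species parts) and $f_1=M_{12},\ f_2=M_{21}$ (from $S(f_1,f_2)=0$, cf. Lemma~\ref{inequ}). Matching means then gives $u_1=u_{12}$ and $T_1=T_{12}$; combining $u_{12}=\delta u_1+(1-\delta)u_2$ with $\delta\neq 1$ yields $u_1=u_2$, and then $T_{12}=\alpha T_1+(1-\alpha)T_2$ with $\alpha\neq 1$ yields $T_1=T_2$, so that $u_1=u_2=u_{12}=u_{21}$ and $T_1=T_2=T_{12}=T_{21}$, i.e. both distributions are Maxwellians with a common bulk velocity and temperature.

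I expect no genuinely new difficulty, since the hard estimate $S(f_1,f_2)\le 0$ and its equality case were already settled in Lemma~\ref{inequ} and Theorem~\ref{H-theorem}; as the name \emph{corollary} suggests, the inequality is essentially a consequence of those results. The only point requiring care is the bookkeeping of boundary and transport terms — justifying that the phase-space flux and the force term integrate to zero under the decay assumption and $\nabla_v\cdot F_i=0$, and confirming that the $\partial_t n_k$ and $\nabla_x\cdot(n_k u_k)$ contributions cancel via the continuity equation. That routine accounting is what transforms the kinetic entropy production into the stated macroscopic entropy inequality.
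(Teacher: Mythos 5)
Your proposal is correct and follows essentially the same route as the paper: the paper obtains the identity $\partial_t H + \nabla_x\cdot\int v(f_1\ln f_1+f_2\ln f_2)\,dv + \int F_i\cdot\nabla_v(f_1\ln f_1+f_2\ln f_2)\,dv = S(f_1,f_2)$ by multiplying the two BGK equations by $\ln f_1$ and $\ln f_2$ and integrating, kills the force term via $\nabla_v\cdot F_i=0$ and decay, and then invokes Theorem \ref{H-theorem} for the sign and the equality characterization. Your bookkeeping (continuity-equation cancellation of $\partial_t n_k+\nabla_x\cdot(n_k u_k)$, integration by parts in $v$, and inheriting the equality case with $\alpha,\delta\neq 1$) is exactly the content the paper leaves implicit.
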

We now explicitly specify the global equilibrium.

\subsection{The structure of the equilibrium}
\begin{theorem}[Equilibrium]
Assume $f_1, f_2 >0$.
Assume relationship \eqref{coll} , the conditions \eqref{density}, \eqref{convexvel}, \eqref{veloc}, \eqref{contemp} and \eqref{temp}and the positivity of the temperatures \eqref{gamma}.
Then $Q_{11}(f_1,f_1)+Q_{12}(f_1,f_2)=0$ and $Q_{22}(f_2,f_2)+Q_{21}(f_2,f_1)=0$, if and only if $f_1$ and $f_2$ are Maxwell distributions with equal mean velocity and temperature.
\end{theorem}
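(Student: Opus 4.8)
The plan is to read the two collision sums directly off the BGK system \eqref{BGK}, namely
\[
Q_{11}(f_1,f_1)+Q_{12}(f_1,f_2)=\nu_{11}n_1(M_1-f_1)+\nu_{12}n_2(M_{12}-f_1),
\]
together with the analogous expression for species $2$, and then treat the two implications separately. For the routine direction ``Maxwellians $\Rightarrow$ collision sums vanish'' I would substitute $f_1=M_1$, $f_2=M_2$ with a common velocity $u_1=u_2=:u$ and common temperature $T_1=T_2=:T$ into the definitions \eqref{convexvel}, \eqref{veloc}, \eqref{contemp}, \eqref{temp}. Using \eqref{density} one checks that $u_{12}=\delta u+(1-\delta)u=u$, $u_{21}=u$, $T_{12}=T$ and $T_{21}=T$, so that the interspecies Maxwellians in \eqref{BGKmix} collapse to $M_{12}=M_1=f_1$ and $M_{21}=M_2=f_2$. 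Then every bracket $M_1-f_1$, $M_{12}-f_1$, $M_2-f_2$, $M_{21}-f_2$ vanishes identically and both collision sums are zero.

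For the converse I would exploit the H-theorem already established in Theorem \ref{H-theorem}. If both collision sums vanish, then multiplying the first by $\ln f_1$ and the second by $\ln f_2$ and integrating in $v$ shows that the total entropy production is exactly zero; by the equality statement of Theorem \ref{H-theorem} this forces $f_1$ and $f_2$ to be Maxwellians with equal bulk velocity and temperature, which is the claim. As a more explicit back-up, the vanishing of the first collision sum gives $f_1=\lambda M_1+(1-\lambda)M_{12}$ with $\lambda=\nu_{11}n_1/(\nu_{11}n_1+\nu_{12}n_2)\in(0,1)$, so that $f_1$ is a genuine convex combination of two Maxwellians of mass $m_1$. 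Taking the moments \eqref{macrosqu} of this identity and using $n_{12}=n_1$ yields successively $u_{12}=u_1$ (from the momentum moment) and $T_{12}=T_1$ (from the energy moment); feeding these into \eqref{convexvel} and \eqref{contemp}, where one uses $\delta\neq1$ and $\alpha\neq1$, gives $u_1=u_2$ and $T_1=T_2$, whence $M_{12}=M_1$ and therefore $f_1=M_1$. The same argument applied to the second equation, now using the already established $u_1=u_2$, $T_1=T_2$ together with \eqref{veloc} and \eqref{temp}, gives $M_{21}=M_2$ and $f_2=M_2$.

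The main obstacle is the converse: one must upgrade the algebraic identity ``$f_1$ equals a convex combination of Maxwellians'' to the genuinely nonlinear conclusion that $f_1$ is itself a single Maxwellian and that the two species share one velocity and one temperature. This is precisely the content of the equality case of the H-theorem, so the cleanest route is to invoke Theorem \ref{H-theorem}; the explicit moment computation is the supporting argument, and it also exposes why the parameter restrictions $\delta\neq1$, $\alpha\neq1$ (and the positivity \eqref{gamma}, which keeps all temperatures well defined) are exactly what is needed for the convex combination to degenerate onto a single Gaussian.
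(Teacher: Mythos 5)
Your proposal is correct, and its primary route is exactly the paper's proof: the paper's entire argument consists of observing that if both collision sums vanish pointwise, then $\ln f_1\,(Q_{11}+Q_{12})+\ln f_2\,(Q_{22}+Q_{21})=0$, so the entropy production vanishes and the equality case of Theorem \ref{H-theorem} gives that $f_1$ and $f_2$ are Maxwellians with common velocity and temperature; the routine substitution direction that you spell out is left implicit there. What you add is genuinely different and worth noting: your moment-based back-up argument proves the hard direction without any of the H-theorem machinery (the convexity inequality \eqref{convex} and Lemma \ref{inequ}). From $f_1=\lambda M_1+(1-\lambda)M_{12}$ with $\lambda=\nu_{11}n_1/(\nu_{11}n_1+\nu_{12}n_2)$, the moments \eqref{macrosqu} together with $n_{12}=n_1$ force $u_{12}=u_1$ and then $T_{12}=T_1$, and \eqref{convexvel}, \eqref{contemp} with $\delta\neq1$, $\alpha\neq1$ collapse everything to $u_1=u_2$, $T_1=T_2$, $M_{12}=M_1$, hence $f_1=M_1$; species $2$ then follows at once from \eqref{veloc} and \eqref{temp}. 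This elementary route also exposes two hypotheses that the theorem's statement omits but the result actually needs: $\alpha,\delta\neq 1$ (they appear in Theorem \ref{H-theorem}, on which the paper's one-line proof silently relies) --- indeed, if $\delta=1$ then \eqref{gamma} forces $\gamma=0$, the interspecies Maxwellians decouple, and $f_1=M_1$, $f_2=M_2$ with $T_1=T_2$ but $u_1\neq u_2$ annihilates both collision sums --- together with strict positivity $\nu_{12}n_2>0$, $\nu_{21}n_1>0$, without which $\lambda=1$ and the momentum moment no longer forces $u_{12}=u_1$.
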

\begin{proof}
If $Q_{11}(f_1,f_1)+Q_{12}(f_1,f_2)=0$ and $Q_{22}(f_2,f_2)+Q_{21}(f_2,f_1)=0$ , then $\ln f_1 ~ Q_{11}(f_1,f_1)+\ln f_1 ~ Q_{12}(f_1,f_2)+\ln f_2 ~ Q_{22}(f_2,f_2)+\ln f_2 ~ Q_{21}(f_2,f_1)=0$ and so we have equality in the H-theorem.
\end{proof}
\begin{remark}
 Once the species have an equal mean velocity and temperature the indifferentiability principle in \cite{AndriesAokiPerthame2002} holds. Thus in our model two identical species are indifferentiable once they reach equilibrium. For more information about the indifferentiability principle see section 3 or \cite{AndriesAokiPerthame2002}.
\end{remark}
\section{Special cases of this model in the literature}
In this section, we review models that have been previously introduced, \cite{gross_krook1956} and \cite{hamel1965}, and that can be considered as special cases of the class described here. Thanks to this, all of them enjoy an H-theorem, conservation properties and positivity of the interspecies temperatures.
\subsection{Model of Gross and Krook}
The model of Gross and Krook \cite{gross_krook1956} is obtained by choosing $\varepsilon = 1$, while $\delta$, $\alpha$ and $\gamma$ are free parameters. In the case of a plasma they suggest $\delta=\frac{m_1}{m_1+m_2}$.
They also assume \eqref{density} for conservation of mass. They assume one of the mixture velocities to be a linear combination of $u_1$ and $u_2$, similar to \eqref{convexvel} and deduce \eqref{veloc} from conservation of momentum. They further choose $T_{12}$ of the form $$T_{12}= \alpha T_1 + (1- \alpha) T_2 + A |u_1|^2 + B |u_2|^2 + C u_1 u_2, \quad \alpha, A,B,C \in \mathbb{R},$$ and deduce with conservation of energy that $T_{21}$ is given by $$T_{21} = (1-\alpha) T_1 + \alpha T_2 + D |u_1|^2 + E |u_2|^2 + F u_1 u_2,$$ where five of the variables $A,B,C,D,E \in \mathbb{R}$ are determined in order to get conservation of energy. From the present work the constants must be chosen in order to satisfy \eqref{gamma}. In this case the model satisfies the H-Theorem.
\subsection{Model of Hamel}
Hamel's model \cite{hamel1965} is obtained by choosing $\varepsilon=1$, $ \delta= \frac{m_1}{m_1+m_2}$, $\alpha=\frac{m_1^2+m_2^2}{(m_1+m_2)^2}$ and $\gamma=\frac{m_1 m_2}{(m_1+m_2)^2} \frac{m_2}{3}$.
The parameters are chosen in order to reproduce the fluxes of momentum and energy of Maxwellian molecules. His model also takes into account the physical considerations described in remark \ref{consider}.
The model satisfies condition \eqref{density} for conservation of mass. $u_{12}$ and $u_{21}$ satisfy condition \eqref{convexvel} respectively \eqref{veloc} with this chosen $\delta$ and $\varepsilon$, so we have conservation of total momentum.
$T_{12}$ and $T_{21}$ are of the form \eqref{contemp} respective \eqref{temp}, so we have conservation of energy. 
The requirements for positivity of the temperature are satisfied, since $$ \alpha = \frac{m_1^2+ m_2^2}{(m_1+m_2)^2} \geq 0 = 1 - \frac{1}{\varepsilon} \quad \text{for} \quad \varepsilon =1,$$ and conditions \eqref{gamma} and \eqref{gammapos} then reduce to $m_1 \geq 0$ and $m_2 \geq 0$, so Hamel's model has positive temperatures and an H-theorem. \\
\subsection{Comparison with the model of Andries, Aoki and Perthame}
The next model also describes a gas mixture of Maxwellian molecules, but it contains only one term on the right-hand side \cite{AndriesAokiPerthame2002}.
\begin{align}
\begin{split}
\partial_t f_1 + v \cdot \nabla_x f_1 &= (\nu_{11} n_1 + \nu_{12} n_2)  (M^{(1)} - f_1),
\\
\partial_t f_2 + v \cdot \nabla_x f_2 &= (\nu_{22} n_1 + \nu_{21} n_1)  (M^{(2)} - f_2).
\end{split}
\end{align}
The Maxwell distributions are given by
\begin{align}
\begin{split}
M^{(1)}&=\frac{n_1}{\sqrt{2 \pi \frac{T^{(1)}}{m_1}}^3 } e^{- \frac{m_1|v-u^{(1)}|^2}{2 T^{(1)}}},
\\
M^{(2)}&= \frac{n_2}{\sqrt{2 \pi \frac{T^{(2)}}{m_2}}^3 } e^{- \frac{m_2|v-u^{(2)}|^2}{2 T^{(2)}}},
\end{split}
\end{align}
with the interspecies velocities
\begin{align}
\begin{split}
\\
u^{(1)}&= u_1 + 2 \frac{m_2}{m_1 + m_2} \frac{\chi_{12}}{\nu_{11} n_1 + \nu_{12} n_2} n_2 ( u_2 - u_1 ),
\\
u^{(2)}&= u_2 + 2 \frac{m_1}{m_1 + m_2} \frac{\chi_{21}}{\nu_{22} n_2 + \nu_{21} n_1} n_1 ( u_1 - u_2 ), 
\end{split}
\end{align}
and the interspecies temperatures
\begin{align}
\begin{split}
T^{(1)} &= T_1 - \frac{m_1}{3} |u^{(1)}- u_1|^2 \\ &+ \frac{2}{3} \frac{m_1 m_2 }{(m_1 + m_2)^2} \frac{4 \chi_{12}}{\nu_{11} n_1 + \nu_{12} n_2} n_2 ( \frac{3}{2}(T_2 - T_1 ) + m_2 \frac{|u_2- u_1|^2}{2}),
\\
T^{(2)} &= T_2 - \frac{m_2}{3} |u^{(2)}- u_2|^2 \\ &+ \frac{2}{3} \frac{m_1 m_2 }{(m_1 + m_2)^2} \frac{4 \chi_{21}}{\nu_{22} n_2 + \nu_{21} n_1} n_1 ( \frac{3}{2}(T_1 - T_2 ) + m_1 \frac{|u_1 - u_2 |^2}{2}),
\end{split}
\end{align}
where $\chi_{12},$ $\chi_{21},$ $\nu_{12}$ and $\nu_{21}$ are parameters which are related to the differential cross section. For the detailed expressions see \cite{AndriesAokiPerthame2002}.

The model also satisfies the conservation properties and the H-theorem with equality if and only if the distribution functions are Maxwell distributions with equal mean velocity and temperature. 
\begin{remark}
The flux of momentum of the species $1$ in this model is given by 
\begin{align*}
m_1 \nu_1 \int n_2 v (M^{(1)}-f_1) dv= 2 \frac{m_2 m_1}{m_2+m_1} \chi_{12} ( u_2 - u_1) n_1 n_2 .
\end{align*}
The flux of the energy of species $1$ is given by 
\begin{align*}
\int \frac{m_1}{2} |v|^2 \nu_1 (M^{(1)} - f_1 ) dv = n_1 n_2 \frac{2 m_2 m_1 \chi_{12}}{(m_1+m_2)}(-\frac{m_1}{m_1+m_2} |u_1|^2 + \frac{m_2}{m_1+m_2} |u_2|^2 \\+ \frac{m_1-m_2}{m_1+m_2} u_1 u_2+\frac{2}{m_1+m_2}\frac{3}{2}(T_2-T_1)).
\end{align*}
So the model discussed here reproduces the same momentum and energy fluxes between the species \eqref{flux_mom_12}, \eqref{flu_mom_21}, \eqref{flux_en_12} and \eqref{flux_en_21} choosing the parameters $\delta, \alpha$ and $\gamma$ as:
\begin{align*}
\delta &= -2 \frac{m_2}{m_1+m_2} \frac{\chi_{12}}{\nu_{12}}+1, \\
\alpha &= - 4 \frac{m_1 m_2}{(m_1 + m_2)^2} \frac{\chi_{12}}{\nu_{12}} +1, \\
\gamma &= \frac{4}{3} \frac{m_1 m_2^2}{(m_1+m_2)^2} \frac{\chi_{12}}{\nu_{12}} n_1 n_2(1- \frac{\chi_{12}}{\nu_{12}}).
\end{align*}
For $\chi_{12}\leq\nu_{12}$ the parameter $\gamma$ is non-negative. 
\end{remark}
The model of Andries, Aoki and Perthame  has another property, proposition 3.2 in \cite{AndriesAokiPerthame2002},  which the models described above do not have. It is called the indifferentiability principle.
It denotes the following property: 
\begin{remark}[Indifferentiability principle]
When the masses $m_1$ and $m_2$ and the collision frequencies $\nu_{11}, \nu_{12}, \nu_{21}$ and $\nu_{22}$ are  identical, the total distribution function $f=f_1+f_2$ obeys a single species BGK equation.
\end{remark}
See also \cite{Brull} for another model which also has the indifferentiability principle.
The model in this paper does not satisfy the indifferentiability principle. The indifferentiability principle in our model holds only in the global equilibrium. On physical grounds it is reasonable to assume that two species of identical particles become really indifferentiable when they have the same macroscopic speeds and temperatures.
\section{Deriving macroscopic MHD equations} 
In this section we want to illustrate the model in the case of ions and electrons. Finally, we derive the typical macroscopic equations for a mixture composed of ions and electrons, the equations of ideal Magnetohydrodynamics, from our model. You can also find a similar derivation in \cite{degond2004} but for an isothermal flow.
\subsection{The BGK model for ions and electrons}
We consider the case of ions and electrons and set $\varepsilon=\frac{m_2}{m_1}$ as it is motivated in section 2.4 or \cite{bellan2006}. For simplicity we take $\delta=0$, $\alpha= \frac{m_2}{m_1+m_2}$ and $\gamma=0$, although the MHD equations can also derived from the general model. We replace the index $1$ by $i$ for the ions and $2$ by $e$ for the electrons. Then the particles are subjected to the Lorentz force $F_i= e (E+ v \times B)$ and $F_e=-e (E+v \times B )$, where $e$ is the elementary charge and $E$ and $B$ the mean electric and magnetic fields given by the Maxwell equations. In this case the model \eqref{BGK} rewrites as
\begin{align} \begin{split} 
\partial_t f_i + \nabla_x \cdot (v f_i) + \frac{e (E+ v \times B)}{m_i} \nabla_v f_i  &= \nu_{ii} n_i (M_i - f_i) + \nu_{ie} n_e (M_{ie}- f_i),
\\ 
\partial_t f_e + \nabla_x \cdot (v f_e)- \frac{e (E+ v \times B)}{m_e} \nabla_v f_e &=\nu_{ee} n_e (M_e - f_e) + \frac{m_i}{m_e} \nu_{ie} n_i (M_{ei}- f_e) .
\end{split}
\label{BGKplasma}
\end{align}
\subsection{Macroscopic equations for ions and electrons}
In order to derive macroscopic equations, we multiply the first equation of (\ref{BGK}) with $(1,m_i v,\frac{m_i}{2}|v|^2)$, and the second with $(1,m_e v,\frac{m_e}{2}|v|^2)$. Then we integrate  them with respect to the velocity.The obtained macroscopic system is not closed since we obtain terms of the form $\int v \otimes v f_i dv, \int v \otimes v f_e dv, \int |v|^2 v f_i dv$ and $\int |v|^2 v f_e dv$. All the other terms are functions of known quantities given in \eqref{macrosqu}. There are plasmas where the two species first relax to its own equilibrium and then to a global one. According to Chapter 1.9 in \cite{bellan2006}, we expect that plasmas are typically not in thermodynamic equilibrium, although the components may be in a partial equilibrium. This means, the electrons are in thermal equilibrium with itself but not with the ions, and the other way round. So in our considerations we assume that each species is in equilibrium with itself, e.g. setting $f_i=M_i$ and $f_e=M_e$. In this way, we obtain a closed system of equations for the conservation of mass, momentum and energy.
\begin{align}
\partial_t n_k + \nabla_x(n_k u_k)=0, \quad k=i,e,
\end{align}
\begin{align}
\begin{split}
\partial_t(m_i n_i u_i)+\nabla_x (n_i T_i) + \nabla_x \cdot (m_i u_i \otimes u_i n_i ) - e n_i (E+u_i \times B) \\= 
\nu_{ie} m_i n_e n_i (u_e - u_i),
\end{split}
\end{align}
\begin{align}
\begin{split}
\partial_t(m_e n_e u_e)+\nabla_x (n_e T_e) + \nabla_x \cdot (m_e u_e \otimes u_e n_e ) + e n_e (E+u_e \times B) \\= 
\nu_{ei} m_e n_e n_i (u_i - u_e),
\end{split}
\end{align}
\begin{align}
\begin{split}
\partial_t(\frac{m_i}{2} n_i |u_i|^2 + \frac{3}{2} n_i T_i) + \nabla_x \cdot(\frac{5}{2} p_i u_i) +\nabla_x \cdot( \frac{m_i}{2} n_i |u_i|^2 u_i) - e n_i E u_i  \\= \frac{1}{2}\nu_{ie} n_e n_i m_i( |u_e|^2 - |u_i|^2)+ \nu_{ie}\frac{3}{2} n_i n_e  \frac{m_i}{m_e+m_i}( T_e -T_i),  
\end{split}
\end{align}
\begin{align}
\begin{split}
\partial_t(\frac{m_e}{2} n_e |u_e|^2 + \frac{3}{2} n_e T_e) + \nabla_x \cdot(\frac{5}{2} p_e u_e )+ \nabla_x \cdot(\frac{m_e}{2} n_e |u_e|^2 u_e) +e  n_e E u_e  \\=\frac{1}{2} \nu_{ei} n_e n_i m_e( |u_i|^2 - |u_e|^2)+ \nu_{ei}\frac{3}{2} n_i n_e  \frac{m_e}{m_e+m_i}( T_i -T_e). 
\end{split}
\end{align}
In order to determine the time evolution of the electric and magnetic field, we couple the system with the Maxwell equations.
\begin{align}
\nabla_x \cdot E &= \frac{1}{\varepsilon_0} \rho_c,
\\
\nabla_x \times E + \frac{\partial B}{\partial t}&=0,
\\
\nabla_x \times B &= \mu_0 j + \mu_0 \varepsilon_0 \frac{\partial E}{\partial t},
\\
\nabla_x \cdot B &=0,
\\
\rho_c &= e(n_i-n_e),
\\
j&=e(n_i u_i-n_e u_e),
\end{align}
where $c^2=\frac{1}{\mu_0 \varepsilon_0}$ is the speed of light and $\mu_0, \varepsilon_0$ the magnetic and electric vacuum permittivity. 
\subsection{Dimensionless equations}
First we define dimensionless variables of the time $t$, the length $x$, the velocities $u_{e}, u_i$, the number densities $n_{e}, n_i$, the temperatures $T_{e}, T_i$, the magnetic field $B$, the electric field $E$, the electron-ion collision frequency $\nu_{ei}$, the ion-electron collision frequency $\nu_{ie}$ and the current density $j$, for example $t'=$\mbox{$^t$/$_{\bar{t}}$} for a typical time scale $\bar{t}$. In particular, the order of magnitudes of some quantities are assumed to be linked: We assume that both species have densities, mean velocities and temperatures of the same order of magnitude, e.g. $ \bar{n}_i = \bar{n}_e = \bar{n}$, $ \quad \bar{u}_i = \bar{u}_e =\bar{u}=$ \mbox{$^{\bar{x}}$/$_{\bar{t}}$} and $\bar{T}_i=\bar{T}_e = \bar{T}$. With the last two assumptions we assume that we are close to a thermodynamic equilibrium in which the two mean velocities and temperatures would be equal. Further, we assume that $ \bar{E}=\bar{B}\bar{u}$. From non-dimensionalizing the first two Maxwell equations we see that this means that the electric field induced by a change of the magnetic field in time dominates over the fields which arise from charges and currents. Further we assume that $\bar{B}= \mu_0 \bar{x} \bar{j}$, which means that the magnetic field induced by currents dominates over the magnetic field due to changes of the electric field in time. Finally, we assume that $\bar{\nu}_{ie}=\frac{m_e}{m_i}\bar{\nu}_{ei}$ 
\\This leads to the following equations, where now the variables are non-dimensional
\begin{align*}
\partial_t n_k + \nabla_x \cdot(n_k u_k)=0, \quad k=i,e,
\end{align*}
\begin{align*}
\begin{split}
 \partial_t( n_i u_i)+ C_1 ~\nabla_x (n_i T_i) + \nabla_x \cdot ( n_i u_i \otimes u_i  ) - C_2 ~n_i (E+u_i \times B) = \\
C_3 ~\nu_{ie} n_e n_i (u_e - u_i),
\end{split}
\end{align*}
\begin{align*}
\begin{split}
C_4 ~ \partial_t( n_e u_e)+C_1~\nabla_x (n_e T_e) + C_4~ \nabla_x \cdot ( n_e u_e \otimes u_e  ) + C_2 ~ n_e (E+u_e \times B) = \\ C_3~ \nu_{ie}  n_e n_i (u_i - u_e),
\end{split}
\end{align*}
\begin{align*}
\begin{split}
C_1~\partial_t (\frac{3}{2} n_i T_i)+ \partial_t (\frac{1}{2} n_i |u_i|^2) +C_1~ \nabla_x \cdot(\frac{5}{2} n_i T_i u_i) + \nabla_x \cdot (\frac{1}{2} n_i |u_i|^2 u_i)\\ = C_2~ E  n_i u_i+C_3~ \frac{1}{2}\nu_{ie} n_e n_i ( |u_e|^2 - |u_i|^2)+\frac{1}{1+C_4} C_3 C_1 \nu_{ie}\frac{3}{2} n_i n_e  ( T_e -T_i),
\end{split}
\end{align*}
\begin{align*}
\begin{split}
C_1~\partial_t (\frac{3}{2} n_e T_e)+C_4~ \partial_t ( \frac{1}{2} n_e |u_e|^2) + C_1~\nabla_x \cdot(\frac{5}{2} n_e T_e u_e) +C_4 \nabla_x \cdot(\frac{1}{2} n_e |u_e|^2 u_e)\\ =-C_2~ E   n_e u_e+C_3~\frac{1}{2}\nu_{ei} n_e n_i( |u_i|^2 - |u_e|^2)+\frac{1}{1+C_4}C_3 C_1~ \nu_{ie}\frac{3}{2} n_i n_e  ( T_i -T_e),
\end{split}
\end{align*}
together with the Maxwell equations 
\begin{align*}
C_5 M ~\nabla_x \cdot E = \rho_c,
\end{align*}
\begin{align*}
\nabla_x \times E + \frac{\partial B}{\partial t}=0,
\end{align*}
\begin{align*}
\nabla_x \times B = j + M \frac{\partial E}{\partial t},
\end{align*}
\begin{align*}
\nabla_x \cdot B =0,
\end{align*}
\begin{align*}
\rho_c = (n_i-n_e),
\end{align*}
\begin{align*}
C_5 ~ j=(n_i u_i-n_e u_e).
\end{align*}
The constants $C_i, i=1,...,5$ and $M$ are dimensionless parameters. In particular, {\small $$C_1= \frac{\bar{n} \bar{T}}{m_i \bar{n} \bar{u}^2},\quad C_2= \frac{e \bar{B} \bar{t}}{m_i}, \quad C_3= \bar{\nu}_{ie} \bar{n} \bar{t}, \quad C_4= \frac{m_e}{m_i}, \quad C_5= \frac{\bar{j}}{e \bar{n} \bar{u}} \quad \text{and} \quad M=\frac{\bar{u}^2}{c^2},$$ } coming from non-dimensionalizing. The physical meaning is the following: $C_1$ describes the ratio over the typical scale of thermal energy $\bar{n} \bar{T}$ and of the kinetic energy $m_i \bar{n} \bar{u}^2$ of ions. If we consider an ion travelling with a speed perpendicular to a magnetic field at distance $r$, the force due to the magnetic field $e\bar{B} \bar{u}$ on the particle acts as a centripetal force $\frac{m \bar{u}^2}{r}$, so the norm of the forces is equal $$\frac{m \bar{u}^2}{r}= e\bar{B} \bar{u},$$
which is equivalent to $\omega:= \frac{\bar{u}}{r}= \frac{e \bar{B} \bar{u}}{m}$ which describes a frequency called cyclotron frequency. So $C_2$ is the product of the typical scale of the cyclotron frequency and the typical time scale. $C_3$ is the ratio of the macroscopic time scale and the time scale induced by the Mach number. $C_4$ is the mass ratio and $M$ the typical scale of the speed squared and the speed of light squared. Finally, $C_5$ is the typical scale of the current density induced by electric fields over the typical scale of the current induced by the flow of the particles.
\subsection{The limits to the MHD equations}
Now we consider the formal limit of the mass ratio $C_4 \rightarrow 0$ and the non-relativistic limit $M\rightarrow 0$.
\begin{theorem}
The formal limit of the mass ratio $C_4 \rightarrow 0$ and the non-relativistic limit $M\rightarrow 0$ of the system non-dimensionalized system with the remaining parameters remain finite is the system 
\begin{align*}
\partial_t n + \nabla_x \cdot (nu) = 0,
\\
 \partial_t( n u)+C_1 ~\nabla_x (n T) + \nabla_x \cdot (n u \otimes u  ) = C_2 C_5 ~ j \times B, 
\end{align*} 
\begin{align*}
\begin{split}
C_1~ \partial_t (\frac{3}{2}n T)+ \partial_t (\frac{1}{2} n |u|^2) + C_1 ~ \nabla_x \cdot ( \frac{5}{2} n u (T_e - T_i)) - C_1 C_5 \nabla_x \cdot (\frac{5}{2} T j)\\ + \nabla_x \cdot(\frac{1}{2} n |u|^2 u) =C_2 C_5 ~ E   j,
\end{split}
\end{align*}
\begin{align*}
\frac{C_3}{C_2} C_1 ~ \nabla_x (n T_e)  +  C_3 ~ n (E+u \times B)- C_3 C_5 (j \times B) = \frac{C_3^2}{C_2}  C_5 ~ \nu_{ei}  n j,
\end{align*}
\begin{align*}
\begin{split}
\frac{C_3}{C_2} C_1 ~ \partial_t (\frac{3}{2} n T_e)+ \frac{C_3}{C_2} C_1 ~ \nabla_x \cdot(\frac{5}{2} n T_e u) - \frac{C_3}{C_2} C_1 C_5 \nabla_x \cdot (\frac{5}{2} T_e j) \\ =- C_3 ~ E   n (u - C_5 \frac{j}{n})+ \frac{C_3^2}{C_2}  C_5 ~\nu_{ei} n j u - \frac{C_3^2}{C_2}  C_5^2 \frac{1}{2} \nu_{ei} |j|^2 \\+ \frac{C_3}{C_2} C_3 C_1 ~ \nu_{ei}\frac{3}{2} n^2  ( T_i -T_e),
\end{split}
\end{align*}
\begin{align*}
\nabla_x \times E + \frac{\partial B}{\partial t}=0,
\end{align*}
\begin{align*}
\nabla_x \cdot B =0,
\end{align*}
\begin{align*}
\nabla_x \times B =j,
\end{align*}
\begin{align*}
C_5 j=n (u- u_e).
\end{align*}
\label{Theorem1}
\end{theorem}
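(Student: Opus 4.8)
The plan is to treat this as a purely formal asymptotic reduction: substitute $C_4=0$ and $M=0$ directly into the non-dimensional system of Section~4.3, keep the remaining constants $C_1,C_2,C_3,C_5$ and the collision frequency $\nu_{ei}$ finite, and read off the surviving equations. First I would dispose of the Maxwell block. Setting $M=0$ removes the displacement current $M\,\partial_t E$ from Amp\`ere's law, leaving $\nabla_x\times B=j$, and annihilates the left-hand side of Gauss's law $C_5 M\,\nabla_x\cdot E=\rho_c=n_i-n_e$, which forces quasi-neutrality $n_i=n_e=:n$. Faraday's law $\nabla_x\times E+\partial_t B=0$ and $\nabla_x\cdot B=0$ carry no small parameter and pass to the limit unchanged. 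I would then record the two bookkeeping facts that drive everything else: the mass-averaged velocity $(u_i+C_4 u_e)/(1+C_4)$ collapses to $u_i$ as $C_4\to0$, so I set $u:=u_i$; and the current relation $C_5 j=n_iu_i-n_eu_e$ becomes $C_5 j=n(u-u_e)$, i.e. $u_e=u-C_5 j/n$, the substitution that eliminates the electron velocity in favour of $u$ and $j$.

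Next I would reduce the fluid equations. The ion continuity equation becomes $\partial_t n+\nabla_x\cdot(nu)=0$ immediately, while the electron continuity equation only reproduces $\nabla_x\cdot j=0$, consistent with $\nabla_x\cdot(\nabla_x\times B)=0$. For momentum I add the ion and electron balances: the $C_4$-weighted electron inertia terms drop, the friction terms $\pm C_3\nu_{ie}n^2(u_i-u_e)$ cancel, the two pressure gradients combine into $C_1\nabla_x(nT)$ with $T:=T_i+T_e$, and the Lorentz contributions $-C_2 n(E+u\times B)+C_2 n(E+u_e\times B)=C_2 n(u_e-u)\times B=-C_2 C_5\,j\times B$ produce exactly the stated $j\times B$ forcing. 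The generalized Ohm's law is not a new equation but the electron momentum equation itself: once its $O(C_4)$ inertia terms are discarded it is purely algebraic, and multiplying through by $C_3/C_2$ and inserting $u_e=u-C_5 j/n$ yields the stated relation. Here I would invoke that the non-dimensionalization choice $\bar\nu_{ie}=\tfrac{m_e}{m_i}\bar\nu_{ei}$ makes the dimensionless $\nu_{ie}$ and $\nu_{ei}$ coincide, which is what lets the interaction terms pair off.

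The energy equations go the same way but need more care. Summing the ion and electron energy balances, the $C_4$ kinetic-energy terms vanish, the interaction terms cancel in pairs (the $\pm\nu n^2(|u_i|^2-|u_e|^2)$ kinetic exchange and the $\pm\nu n^2(T_e-T_i)$ relaxation, using $\tfrac{1}{1+C_4}\to1$ and $\nu_{ie}=\nu_{ei}$), the electric terms collapse via $n_iu_i-n_eu_e=C_5 j$ to $C_2 C_5\,E\cdot j$, and the convective heat fluxes are rewritten with $u_e=u-C_5 j/n$; this produces the total energy equation. The separate electron energy equation then follows by rescaling the electron balance by $C_3/C_2$, again substituting $u_e=u-C_5 j/n$ and expanding $|u_i|^2-|u_e|^2=2C_5\,u\cdot j/n-C_5^2|j|^2/n^2$, which generates the Ohmic heating $\tfrac{C_3^2}{C_2}C_5\nu_{ei}n\,j\cdot u$ and the dissipation $-\tfrac{C_3^2}{C_2}C_5^2\tfrac12\nu_{ei}|j|^2$. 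I do not anticipate a genuine analytic obstacle: this is a formal limit, so no convergence of solutions is claimed, and the argument is essentially disciplined bookkeeping in which one must only verify that every surviving term carries neither $C_4$ nor $M$. The single delicate point is the energy balance, where the convective heat-flux terms must be reorganised consistently in $u$, $j$, $T_i$ and $T_e$; keeping track of the $C_5$-weighted current corrections there is the most error-prone step and the one I would check most carefully.
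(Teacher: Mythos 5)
Your proposal is correct and follows essentially the same route as the paper's own proof: impose quasi-neutrality $n_i=n_e=n$ from Gauss's law as $M\to 0$, drop the displacement current and the $C_4$-weighted electron inertia terms, keep ion continuity, sum the momentum and energy balances (with the friction and relaxation terms cancelling pairwise), retain the electron momentum and energy equations rescaled by $C_3/C_2$, and eliminate $u_e$ everywhere via $C_5 j = n(u-u_e)$, exactly as in the appendix. The only additions beyond the paper's argument are harmless observations (electron continuity reducing to $\nabla_x\cdot j=0$, and the explicit remark that $\bar\nu_{ie}=\tfrac{m_e}{m_i}\bar\nu_{ei}$ makes the dimensionless collision frequencies coincide, which the paper uses implicitly).
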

For the interested reader the proof is given in the Appendix. \\
Next, we consider the formal limit $C_5 \rightarrow 0$ and $\frac{C_3}{C_2} \rightarrow 0$, such that $C_2 C_5$ and $\frac{C_3^2 C_5}{C_2}$ remain bounded away from zero. Physically the first limit means that the current from moving particles $e \bar{n} \bar{u}$ dominates over the current due to electric forces $\bar{j}$. The second limit means that the cyclotron frequency $\frac{e \bar{B}}{m_i},$ dominates over the collision frequency $\bar{\nu}_{ie} \bar{n}$, while the current due to electric fields $\bar{j}$ per cyclotron time $1/\frac{e \bar{B}}{m_i}$ over the current induced by the flow $e \bar{n} \bar{u}$ in a typical time scale $\bar{t},$ remains bounded away from zero. Moreover, the ratio of the collision frequency and the cyclotron frequency is assumed to be of the same order of the electric current per collision time $\frac{1}{\bar{\nu}_{ie} \bar{n}}$ over the current induced by the flow per typical time scale.  All in all, we get the following theorem
\begin{theorem}
As $C_5 \rightarrow 0$ and $\frac{C_3}{C_2} \rightarrow 0$, such that $C_2 C_5$ and $\frac{C_3^2 C_5}{C_2}$ remain bounded away from zero, formally the solution of the system in Theorem \ref{Theorem1} tends to the solution of
\begin{align}
\partial_t n + \nabla_x \cdot (nu) &= 0,
\\
 \partial_t( n u)+ C_1 \nabla_x  (n T) + \nabla_x \cdot (n u \otimes u  ) &= C_2 C_5 ~  j \times B,
 \label{1}
\\
C_1 \partial_t (\frac{3}{2} n T)+ \partial_t (\frac{1}{2} n |u|^2) + C_1 \nabla_x \cdot(\frac{5}{2} n T u) + \nabla_x \cdot( \frac{1}{2} n |u|^2 u)& = C_2 C_5 ~ E   j,
\label{2}
\\
 (E+u \times B) &= \frac{C_3 C_5}{C_2} \nu_{ei} j,
 \label{c31}
 \\
 E u &=\frac{C_3^2 C_5}{C_2}\nu_{ei} j u,
 \label{c32}
\\
\nabla_x \times B &=  j,
\label{+}
\\
\nabla_x \times E + \frac{\partial B}{\partial t} &=0,
\label{3}
\\
\nabla_x \cdot B &=0 .
\end{align}
\end{theorem}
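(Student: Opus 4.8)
The plan is to treat this as a purely formal two-parameter scaling limit of the system in Theorem \ref{Theorem1}: no regularity or compactness is invoked, one only substitutes the prescribed orders of the dimensionless constants into each equation and collects terms. I would fix the bookkeeping convention first. The two products $C_2 C_5$ and $\tfrac{C_3^2 C_5}{C_2}$ are declared to be the order-one physical groupings that are retained, while $C_5\to 0$ and $\tfrac{C_3}{C_2}\to 0$ are the vanishing parameters; every coefficient appearing in Theorem \ref{Theorem1} is rewritten as a product of these groupings with powers of $C_5$ and $\tfrac{C_3}{C_2}$, and any term carrying a net positive power of a vanishing parameter is dropped. I would record at the outset that, since $\tfrac{C_3}{C_2}\to0$ while $C_3$ itself does not vanish, one has $C_2\to\infty$ and hence $\tfrac{C_1}{C_2}\to0$; this is the fact that kills the electron-pressure contributions below.

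Next I would dispatch the equations that survive essentially unchanged. The continuity equation passes verbatim. In the ion (total) momentum balance every term carries the coefficient $C_1$ or $1$ except the Lorentz force, whose coefficient is exactly the retained group $C_2C_5$, so it reproduces \eqref{1}. For the total energy balance I would combine the two temperature-flux contributions into the single enthalpy flux $\nabla_x\cdot(\tfrac52 nTu)$, discard the term $C_1C_5\nabla_x\cdot(\tfrac52 Tj)$ since its coefficient is $O(C_5)$, and keep $C_2C_5\,Ej$, which yields \eqref{2}. On the Maxwell side, Faraday's law \eqref{3} and $\nabla_x\cdot B=0$ are untouched, while Amp\`ere's law already lost its displacement current in Theorem \ref{Theorem1} (the $M\to0$ step) and remains \eqref{+}. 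The defining relation $C_5 j=n(u-u_e)$ degenerates to $u_e=u$ as $C_5\to0$ and is therefore absorbed into the single-fluid velocity.

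The heart of the proof is the pair of electron equations, which I would normalize by dividing through by $C_3$. In the electron momentum equation the pressure-gradient term acquires the coefficient $\tfrac{C_1}{C_2}\to0$ and drops, the Hall-type term $C_5(j\times B)$ drops, and after cancelling the common factor $n$ the collisional right-hand side retains the designated order-one group, producing the generalized Ohm's law \eqref{c31}. The main obstacle is the electron energy equation: it contains terms whose coefficients are the large quantities $C_3$ (the convective $E\cdot u$ term) and $\tfrac{C_3^2}{C_2}$ (the temperature-relaxation term $\tfrac{C_3^2}{C_2}C_1\nu_{ei}\tfrac32 n^2(T_i-T_e)$), so the limit is delicate and one must pin down the correct dominant balance rather than simply delete small terms. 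The electron inertia and enthalpy fluxes carry $\tfrac{C_1}{C_2}$ and vanish, the $O(C_5)$ and $O(C_3^2C_5^2/C_2)$ contributions vanish, and I expect the retained collisional group $\tfrac{C_3^2C_5}{C_2}\nu_{ei}ju$ to balance the $Eu$ term, giving \eqref{c32}; simultaneously the temperature-relaxation term enforces $T_i\to T_e$, which is what makes the single temperature $T$ in \eqref{2} consistent.

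Finally I would flag honestly that the four requirements ($C_5\to0$, $\tfrac{C_3}{C_2}\to0$, and $C_2C_5$, $\tfrac{C_3^2C_5}{C_2}$ of order one) cannot all hold simultaneously as a genuine distinguished limit, so the statement is to be read formally: the groupings are held fixed by the bookkeeping convention above while the bare ratios are sent to zero, exactly as in the standard formal passage to ideal MHD. Verifying that, under this convention, each retained term lands on the correct side of \eqref{1}--\eqref{3} with the correct coefficient is then a routine but lengthy check.
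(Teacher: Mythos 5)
Your overall strategy is exactly what the paper intends: the paper's entire proof of this theorem is the sentence ``This is a direct consequence of Theorem \ref{Theorem1}'', and the intended argument is precisely your formal bookkeeping --- observe that $C_2C_5$ bounded away from zero with $C_5\to 0$ forces $C_2\to\infty$ (hence $C_1/C_2\to 0$), normalize the electron equations, and delete every term whose coefficient vanishes while retaining the designated groups. Your treatment of the continuity, total momentum, total energy and Maxwell equations, of the relation $C_5 j=n(u-u_e)$, and of Ohm's law \eqref{c31} is correct, and your closing remark that the scaling assumptions do not form a genuine distinguished limit is accurate (indeed $\frac{C_3^2C_5}{C_2}=\bigl(\frac{C_3}{C_2}\bigr)^2\, C_2C_5$, so both groups cannot be of order one while $\frac{C_3}{C_2}\to 0$) and more honest than the paper.

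The genuine flaw is in your handling of the electron energy equation. You claim the temperature-relaxation term, with coefficient $\frac{C_3^2}{C_2}C_1$, is dominant and enforces $T_i\to T_e$, and that this equilibration is what makes the single temperature $T$ in \eqref{2} consistent. Both halves are wrong. First, $T$ in the limit system is by definition the \emph{total} temperature $T:=T_i+T_e$ (this is set in the appendix proof of Theorem \ref{Theorem1}), so \eqref{2} requires no equilibration at all; your own derivation of \eqref{2} by combining the flux terms already shows this and does not need the extra claim. Second, the relaxation term is subdominant, not dominant: relative to the term $C_3E n u$ it carries the additional factor $\frac{C_3}{C_2}C_1\to 0$, so after dividing the equation by $C_3 n$ it becomes $\frac{C_3}{C_2}C_1\nu_{ei}\frac{3}{2}n(T_i-T_e)\to 0$ and simply drops out. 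This matters for your own argument: if the term really were dominant and forced $T_i-T_e\to 0$, its contribution would be an indeterminate product of a blowing-up coefficient with a vanishing difference, which you could not delete from \eqref{c32}, and the step you call the heart of the proof would be stuck. Finally, under any single consistent normalization the surviving balance is $Eu=\frac{C_3C_5}{C_2}\nu_{ei}\, j\cdot u$ --- exactly what one obtains by dotting \eqref{c31} with $u$ --- whereas the statement's \eqref{c32} carries $\frac{C_3^2C_5}{C_2}$; your mixed bookkeeping (electron inertia and pressure normalized by $C_3$, the collision term not) reproduces this extra factor of $C_3$ rather than exposing it as an internal inconsistency (a typo) of the statement itself.
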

This is a direct consequence of Theorem \ref{Theorem1}.
Last we consider the formal limit $C_3 \rightarrow 0$ which means that interactions of ions and electrons can be neglected. In addition, we choose the special regime where $C_1=1$, that is $\bar{n} \bar{T}= m_i \bar{n} \bar{u}^2$ and $C_2 C_5=1$ in order to obtain the well-known conservation form for ideal MHD.
\begin{theorem}
As $C_3 \rightarrow 0$ and in the special regime $C_1=1$ and $C_2 C_5=1$, formally, we obtain the system of ideal MHD equations
\begin{align*}
\partial_t n + \nabla_x \cdot (nu) &= 0,
\\  \partial_t( n u)+ \nabla_x (u \otimes u n+( p +\frac{1}{2} |B|^2) \bf{1} - B \otimes B) &= 0,
\\
\partial_t (\frac{1}{2} n |u|^2 +\frac{3}{2}p+ \frac{1}{2} |B|^2) + \nabla_x \cdot( \frac{1}{2} n |u|^2 u+\frac{5}{2} p u + |B|^2 u)- B \cdot (B \otimes u))&= 0,
\\
\frac{\partial B}{\partial t} + \nabla_x \cdot (B \otimes u - u \otimes B)&=0,
\\
\nabla_x \cdot B &=0.
\end{align*} 
\label{Theorem3}
\end{theorem}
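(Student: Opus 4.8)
The plan is to start from the system established in the preceding theorem---equations \eqref{1}, \eqref{2}, \eqref{c31}, \eqref{c32}, \eqref{+}, \eqref{3} together with $\nabla_x\cdot B=0$---and to propagate the three remaining reductions $C_3\to0$, $C_1=1$ and $C_2C_5=1$ through each balance law separately. First I would dispose of Ohm's law: putting $C_3=0$ in \eqref{c31} yields the \emph{ideal} Ohm law $E=-u\times B$, while the same limit in \eqref{c32} collapses it to $E\cdot u=0$, which is automatically consistent since $(u\times B)\cdot u=0$. The continuity equation is untouched by the scaling and already has the claimed form, so this isolates the two genuinely algebraic reductions, the momentum and the energy balances.

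For the momentum equation I would set $C_1=1$ and $C_2C_5=1$ in \eqref{1}, so that its right-hand side is the bare Lorentz force $j\times B$. Substituting Amp\`ere's law $j=\nabla_x\times B$ from \eqref{+} and using the identity $(\nabla_x\times B)\times B=\nabla_x\cdot(B\otimes B)-\nabla_x(\tfrac12|B|^2)$, which hinges on $\nabla_x\cdot B=0$, converts the force into the divergence of the Maxwell stress. Writing the pressure term as $\nabla_x(nT)=\nabla_x\cdot(p\,\mathbf{1})$ with $p=nT$ and $\nabla_x(\tfrac12|B|^2)=\nabla_x\cdot(\tfrac12|B|^2\,\mathbf{1})$, and absorbing both into the flux, produces the stated conservation form with flux $nu\otimes u+(p+\tfrac12|B|^2)\,\mathbf{1}-B\otimes B$.

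The induction equation then follows by inserting the ideal Ohm law into Faraday's law \eqref{3}: $\partial_t B=-\nabla_x\times E=\nabla_x\times(u\times B)$, and the identity $\nabla_x\times(u\times B)=-\nabla_x\cdot(B\otimes u-u\otimes B)$, valid because $\nabla_x\cdot B=0$, gives exactly $\partial_t B+\nabla_x\cdot(B\otimes u-u\otimes B)=0$.

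The delicate step is the energy balance. With $C_1=1$ and $C_2C_5=1$ the source in \eqref{2} is $E\cdot j$, and the point is to reinterpret it as the magnetic energy exchange. I would derive the field-energy law by dotting Faraday's law with $B$ and combining $\nabla_x\cdot(E\times B)=B\cdot(\nabla_x\times E)-E\cdot(\nabla_x\times B)$ with Amp\`ere's law, obtaining $E\cdot j=-\tfrac12\partial_t|B|^2-\nabla_x\cdot(E\times B)$. Substituting this into \eqref{2} moves $\tfrac12|B|^2$ into the energy density and the Poynting vector into the flux; rewriting the latter through the ideal Ohm law, $E\times B=(-u\times B)\times B=|B|^2u-(u\cdot B)B$, reproduces precisely the magnetic flux $|B|^2u-B\cdot(B\otimes u)$ in the statement. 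The main obstacle is bookkeeping in this last manipulation---separating the time-derivative and divergence parts of $E\cdot j$ and checking that the Poynting flux collapses to exactly the stated tensorial term---rather than any conceptual difficulty, since all the limits here are formal and simply send the $C_3$-proportional resistive terms to zero.
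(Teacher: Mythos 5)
Your proposal is correct and takes essentially the same route as the paper: the same formal limit kills the resistive terms in \eqref{c31}--\eqref{c32} to give ideal Ohm's law, the momentum equation follows from inserting $j=\nabla_x\times B$ and the Maxwell-stress identity (using $\nabla_x\cdot B=0$), and the induction equation from inserting $E=-u\times B$ into Faraday's law \eqref{3}. Your only deviation is organizational, in the energy step: the paper inserts Amp\`ere's law, Ohm's law, $\nabla_x\cdot B=0$ and the already-derived induction equation directly into \eqref{2}, whereas you first establish the Poynting identity $E\cdot j=-\partial_t\left(\tfrac12|B|^2\right)-\nabla_x\cdot(E\times B)$ and apply Ohm's law at the end --- the same computation in a slightly different order.
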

Again, for the interested reader the proof is given in the appendix.
\section{Conclusion and perspectives}
We derived a BGK equation for mixtures that replaces the Boltzmann collision operator satisfying the conservation properties, the H-theorem, positivity of solutions of positive initial data and positivity of all temperatures.
The BGK collision operator contains a sum of relaxation terms corresponding to each type of interaction, interaction with each species with itself and interaction with the other species. It has the advantage to single out the influence of a certain type of collision directly. For example, if one species has already reached a Maxwell distribution and the other one not.

First, we expect to extend the micro-macro decomposition \cite{Crestetto_2012} to gas mixtures.

Further work will be concentrated on the  problem of deriving the Navier-Stokes equations for mixtures from  kinetic equations as in \cite{Groppi, Brull_2012} in order to estimate accurate values for Fick's diffusion coefficient, the viscosity coefficient, the thermal conductivity and the thermal diffusion parameter. This model offers a possibility of matching experimental data because of the remaining free parameters $\alpha$, $\delta$ and $\gamma$.

\section*{Appendix}
 \begin{proof}[Proof of Theorem 4.1]
We start with the non-dimensionalized system from section 4.3.
In the limit $M \rightarrow 0$, we get from the first Maxwell equation that $n_i$ and $n_e$ converge formally to the same limit $n$. The third Maxwell equation simplifies to 
\begin{align*}
\nabla_x \times B = j .
\end{align*}
We denote the limit of $u_i$ by $u$. Then we get from conservation of the number of ions
\begin{align*}
\partial_t n + \nabla_x \cdot (nu) = 0.
\end{align*}
The momentum equation of the electrons turns into
\begin{align}
C_1 ~ \nabla_x (n T_e) + C_2 ~  n (E+u_e \times B) = C_3 ~\nu_{ie}  n n (u - u_e).
\label{mom_el}
\end{align}
The limit of the sum of the momentum equations with $T:=T_i+T_e$ gives
\begin{align}
 \partial_t( n u)+C_1 ~ \nabla_x (n T) + \nabla_x \cdot ( u \otimes u n ) + C_2 ~ n (u_e-u) \times B =0.
 \label{mom_sum}
\end{align} 
The other Maxwell equations turn into
\begin{align*}
\nabla_x \times E + \frac{\partial B}{\partial t}=0,
\end{align*}
\begin{align*}
\nabla_x \cdot B =0,
\end{align*}
\begin{align}
C_5 j=n (u- u_e).
\label{j}
\end{align}
The energy equation of the electrons leads to
\begin{align}
\begin{split}
C_1 ~\partial_t (\frac{3}{2} n T_e)+ C_1 ~ \nabla_x \cdot(\frac{5}{2} n T_e u_e) \\ =-C_2 ~ E   n_e u_e+C_3 ~ \frac{1}{2}\nu_{ei} n^2( |u|^2 - |u_e|^2)+C_3 C_1 ~ \nu_{ei}\frac{3}{2} n^2  ( T_i -T_e).
\label{en_el}
\end{split}
\end{align}
From the sum of the energy equations we get
\begin{align}
\begin{split}
C_1 ~ \partial_t (\frac{3}{2} nT)+\partial_t( \frac{1}{2} n |u|^2) + C_1 ~ \nabla_x \cdot(\frac{5}{2} ( n T_e u_e + n T_i u)) +\nabla_x \cdot( \frac{1}{2} n |u|^2 u)\\ =C_2 ~ E   n (u-u_e).
\label{en_sum}
\end{split}
\end{align}
Using $C_5 j = n(u-u_e)$, we get from \eqref{mom_el}, \eqref{mom_sum} and \eqref{en_sum} 
\begin{align}
C_1 ~ \nabla_x (n T_e)  + C_2 ~ n (E+u_e \times B) = C_3 C_5 ~ \nu_{ei}  n j,
\label{eq1}
\end{align}
\begin{align}
 \partial_t( n u)+C_1 ~\nabla_x (n T) + \nabla_x \cdot (n u \otimes u  ) = C_2 C_5 ~ j \times B, 
\end{align} 
\begin{align}
\begin{split}
C_1~ \partial_t (\frac{3}{2}n T)+ \partial_t (\frac{1}{2} n |u|^2) + C_1~\nabla_x \cdot(\frac{5}{2} ( n T_e u_e + n T_i u)) + \nabla_x \cdot(\frac{1}{2} n |u|^2 u)\\ =C_2 C_5 ~ E   j .
\end{split}
\label{eqj1}
\end{align}
Writing $|u|^2-|u_e|^2$ as $(u-u_e)\cdot (u+u_e)$ and again replacing $j$ by $C_5 j = n(u-u_e)$, we obtain form \eqref{en_el}
\begin{align}
\begin{split}
C_1 ~ \partial_t (\frac{3}{2} n T_e)+ C_1 ~ \nabla_x \cdot(\frac{5}{2} n T_e u_e) \\ =-C_2 ~ E   n_e u_e+C_3 C_5 ~\frac{1}{2}\nu_{ei} n j (u+u_e)+C_3 C_1 ~ \nu_{ei}\frac{3}{2} n n  ( T_i -T_e) .
\end{split}
\label{eq2}
\end{align}
Equations \eqref{eq1} and \eqref{eq2} are equivalent to
\begin{align}
C_1 ~ \nabla_x (n T_e)  + \frac{C_2}{C_3} C_3 ~ n (E+u_e \times B) = \frac{C_3^2}{C_2} \frac{1}{\frac{C_3}{C_2}} C_5 ~ \nu_{ei}  n j,
\label{eq3}
\end{align}
\begin{align}
\begin{split}
C_1 ~ \partial_t (\frac{3}{2} n T_e)+ C_1 ~ \nabla_x \cdot(\frac{5}{2} n T_e u_e) \\ =-\frac{C_2}{C_3} C_3 ~ E   n u_e+ \frac{C_3^2}{C_2} \frac{1}{\frac{C_3}{C_2}} C_5 ~\frac{1}{2}\nu_{ei} n j (u+u_e)+C_3 C_1 ~ \nu_{ei}\frac{3}{2} n^2  ( T_i -T_e) .
\end{split}
\label{eq4}
\end{align}
We multiply \eqref{eq3} and \eqref{eq4} by $\frac{C_3}{C_2}$ and insert $u_e= u- C_5 \frac{j}{n}$ from \eqref{j}, we get from \eqref{eqj1}, \eqref{eq3} and \eqref{eq4}
\begin{align}
\begin{split}
C_1~ \partial_t (\frac{3}{2}n T)+ \partial_t (\frac{1}{2} n |u|^2) + C_1 ~ \nabla_x \cdot ( \frac{5}{2} n u (T_e - T_i)) - C_1 C_5 \nabla_x \cdot (\frac{5}{2} T j)\\ + \nabla_x \cdot(\frac{1}{2} n |u|^2 u) =C_2 C_5 ~ E   j,
\end{split}
\end{align}
\begin{align}
\frac{C_3}{C_2} C_1 ~ \nabla_x (n T_e)  +  C_3 ~ n (E+u \times B)- C_3 C_5 (j \times B) = \frac{C_3^2}{C_2}  C_5 ~ \nu_{ei}  n j,
\end{align}
\begin{align}
\begin{split}
\frac{C_3}{C_2} C_1 ~ \partial_t (\frac{3}{2} n T_e)+ \frac{C_3}{C_2} C_1 ~ \nabla_x \cdot(\frac{5}{2} n T_e u) - \frac{C_3}{C_2} C_1 C_5 \nabla_x \cdot (\frac{5}{2} T_e j) \\ =- C_3 ~ E   n (u - C_5 \frac{j}{n})+ \frac{C_3^2}{C_2}  C_5 ~\nu_{ei} n j u - \frac{C_3^2}{C_2}  C_5^2 \frac{1}{2} \nu_{ei} |j|^2 + \frac{C_3}{C_2} C_3 C_1 ~ \nu_{ei}\frac{3}{2} n^2  ( T_i -T_e) .
\end{split}
\end{align}
\end{proof}
\begin{proof}[Proof of Theorem \ref{Theorem3}]
In the limit $C_3 \rightarrow 0$, the equations \eqref{c31} and \eqref{c32} turn into
\begin{align}
 E+u \times B &= 0,
 \label{-}
 \\
 E u &= 0 .
\end{align}
 We insert $j= \nabla_x \times B$ from \eqref{+} into \eqref{1}. The j-th component of the term $(\nabla_x \times B)\times B$ can be simplified to $\sum_{n=1}^3 B_n ( \partial_{x_j} B_n - \partial_{x_n} B_j), \quad j=1,2,3$. Since $\nabla_x \cdot B=0$, we can add $\nabla_x \cdot B B_j,$ so we get $\sum_{n=1}^3 B_n ( \partial_{x_j} B_n - \partial_{x_n} B_j) + \nabla_x B B_j$ which is the $j-$th component of $- \nabla_x \cdot ( \frac{1}{2} |B|^2 \bf{1} - B \otimes B).$ Thus, \eqref{1} turns into 
$$  \partial_t( n u)+ \nabla_x (u \otimes u n+( n T +\frac{1}{2} |B|^2) \bf{1} - B \otimes B) = 0. $$
Now, we insert $E= - u \times B$ from \eqref{-} into \eqref{3}.  In a similar way again using $\nabla_x B = 0$, we obtain 
$ - \nabla_x \times (u \times B)= \nabla_x \cdot (B \otimes u - u \otimes B),$ so \eqref{3} leads to
\begin{align}
\frac{\partial B}{\partial t} + \nabla_x \cdot (B \otimes u - u \otimes B)=0.
\label{neu}
\end{align}
Finally, inserting \eqref{+}, \eqref{-}, $\nabla_x B=0$ and \eqref{neu} into \eqref{2}, leads to
$$ \partial_t (\frac{1}{2} n |u|^2 +\frac{3}{2} n T+ \frac{1}{2} |B|^2) + \nabla_x \cdot( \frac{1}{2} n |u|^2 u+\frac{5}{2} n T u + |B|^2 u)- B \cdot (B \otimes u))= 0.$$
\end{proof}

\newpage

\end{document}